\documentclass[a4paper, reqno]{amsart}

\usepackage{amsmath, amsthm, amssymb}
\usepackage{color}
\usepackage{fixltx2e}
\usepackage{enumitem}
\usepackage{url}
\usepackage{lmodern}

\theoremstyle{plain}
\newtheorem{thm}{Theorem}[section]
\newtheorem{lemma}[thm]{Lemma}

\numberwithin{equation}{section}

\let\originalleft\left
\let\originalright\right
\renewcommand{\left}{\mathopen{}\mathclose\bgroup\originalleft}
\renewcommand{\right}{\aftergroup\egroup\originalright}
\renewcommand\Re{\operatorname{Re}}
\renewcommand\Im{\operatorname{Im}}

\newcommand\twoln[2]{{\substack{#1 \\ #2}}}

\newcommand\e{e}
\newcommand\emod[2]{e\left( \frac{#1}{#2} \right)}

\newcommand\smod[1]{\, (#1)}
\newcommand\arcosh{\operatorname{arcosh}}

\newcommand\BigO[1]{\mathcal{O}\left(#1\right)}
\newcommand\supp{\operatorname{supp}}

\title{The shifted convolution of divisor functions}
\author{Berke Topacogullari}
\address{Mathematisches Institut, Bunsenstrasse 3-5, D-37073 G\"ottingen, Germany}
\email{btopaco@uni-goettingen.de}

\begin{document}

\bibliographystyle{abbrv}
\subjclass[2010]{Primary 11N37; Secondary 11F30, 11N75}
\keywords{divisor functions, holomorphic cusp forms, shifted convolution sum, Kuznetsov formula}

\begin{abstract}
  We prove an asymptotic formula for the shifted convolution of the divisor functions $d_3(n)$ and $d(n)$, which is uniform in the shift parameter and which has a power-saving error term.
  The method is also applied to give analogous estimates for the shifted convolution of $d_3(n)$ and Fourier coefficents of holomorphic cusp forms.
  These asymptotics improve previous results obtained by several different authors.
\end{abstract}
\maketitle

\section{Introduction}

The binary additive divisor problem is concerned with sums of the form
\[ \sum_{n \leq x} d(n) d(n + h), \quad h \geq 1, \]
where \( d(n) \) is the usual divisor function.
In the past decades a lot of effort has been made to study this problem and several results have been obtained (see \cite{Moto_ThBinAddDivProb} for a historical survey).

Here we will go one step further and look at the sums
\[ D^+(x; h) := \sum_{n \leq x} d_3(n) d(n + h) \quad \text{and} \quad D^-(x; h) := \sum_{n \leq x} d_3(n + h) d(n), \quad h \geq 1, \]
where \( d_3(n) \) is the ternary divisor function.
This problem has also been studied by several authors, beginning with Hooley \cite{Hoo_AsympFormTheNumb}.
The first result with a power-saving error term seems to be given by Deshouillers \cite{Desh_MajMoySomKl}, who used spectral methods to attack a smoothed version of this problem, much in the spirit of his earlier joint work with Iwaniec \cite{DeshIwa_AddDivProb} on the binary additive divisor problem.
Naturally, Deshouillers' result can also be used to treat sums like \( D^{\pm}(x, h) \) with sharp cut-off, although he did not work out the details.

As Friedlander and Iwaniec \cite{FriedIwan_IncomplKlSumsDivProb} pointed out, another approach was possible as a consequence on their work on the ternary divisor function in arithmetic progressions.
Heath-Brown \cite{HB_DivFuncArithProg} improved their result, and showed that
\begin{align}\label{eqn: result of HB}
  D^-(x; 1) = x P(\log x) + \mathcal{O}\left( x^{\frac{101}{102} + \varepsilon} \right)
\end{align}
for any \( \varepsilon > 0 \), where \(P\) is a polynomial of degree three.

Bykovski{\u\i} and Vinogradov \cite{BykVino_InhomConv} returned to the spectral approach of Deshouillers \cite{Desh_MajMoySomKl} based on the Kuznetsov formula and stated \eqref{eqn: result of HB} with an exponent \( \frac89 \) in the error term.
Unfortunately, not more than a few brief hints were given to support this claim, and our first result is a detailed proof of the following asymptotic formula, which yields in addition a substantial range of uniformity in the shift parameter \(h\).

\begin{thm}\label{thm: main theorem for d(n)}
  We have for \( h \ll x^\frac23 \),
  \[ D^\pm(x; h) = x P_h(\log x) + \BigO{ x^{ \frac89 + \varepsilon } }, \]
  where \( P_h \) is a polynomial of degree three, and where the implied constants depend only on \(\varepsilon\).
\end{thm}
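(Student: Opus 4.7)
The plan is to reduce the sharp-cutoff sums $D^\pm(x;h)$ to smoothly weighted versions and then attack the smooth sums by opening the ternary divisor function and invoking the spectral theory of automorphic forms via Kuznetsov's formula. First, introduce a smooth weight $V$ with $V \equiv 1$ on $[0, 1-\eta]$, supported in $[0,1]$, and transitioning over a window of length $\eta$; then $\sum_n d_3(n)\, d(n \pm h)\, V(n/x)$ approximates $D^\pm(x;h)$ with a trivial boundary error of size $\ll \eta x \cdot x^\varepsilon$, which is admissible provided $\eta \geq x^{-1/9+\varepsilon}$.

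To estimate the smoothed sum $S(x,h)$, open $d_3(n) = \sum_{abc=n} 1$ and insert a smooth dyadic partition of unity, localizing $a,b,c$ at scales $A \leq B \leq C$ with $ABC \asymp x$ (so $A \leq x^{1/3}$). Fixing $a$ and setting $m = bc$, each dyadic piece becomes a shifted convolution with scaling,
\[
T_a \;:=\; \sum_m d(m)\, d(am \pm h)\, W\!\left(\tfrac{m}{M}\right), \qquad M \asymp x/a.
\]
The main computational work is to analyze $T_a$ spectrally: open the inner divisor as $d(am \pm h) = \sum_{d \mid am \pm h} 1$, smoothly decompose $d$ dyadically, invert the congruence $am \equiv \mp h \pmod{d}$ (after extracting common factors with $a$), and apply Voronoi summation to the inner sum of $d(m)$ over an arithmetic progression modulo $d$. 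This converts $T_a$ into a main term plus sums of Kloosterman sums $S(\mp h \bar{a}, *; d)$, to which Kuznetsov's formula applies. The spectral side is then bounded via the standard large-sieve inequalities for the Fourier coefficients of Maass forms, holomorphic cusp forms, and Eisenstein series; the main terms (from the diagonal of the Voronoi transform together with the continuous spectrum) combine, after summation over $a$, to produce the expected $xP_h(\log x)$ with $P_h$ of degree three, consistent with the triple pole at $s=1$ of the associated Dirichlet series.

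The principal obstacle is keeping the spectral bounds uniform in both the scaling parameter $a$ and the shift $h$: the Bessel-function transforms that arise on the spectral side depend sensitively on the ratio of $h$ to $am$, and the hypothesis $h \ll x^{2/3}$ identifies exactly the range in which these transforms still decay fast enough for the large-sieve bound to beat the trivial estimate. A secondary but still delicate point is balancing the small-$a$ regime (where Kuznetsov is most efficient) against the large-$a$ range, where one exploits the symmetry of $d_3$ under permutation of $(a,b,c)$ to re-choose the distinguished variable and keep the inner summation length under control; optimizing this trade-off is what should produce the exponent $\tfrac{8}{9}$.
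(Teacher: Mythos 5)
Your overall strategy matches the paper's: pass to a smoothed weight with a transition window of width $\approx x^{-1/9}$ (so the boundary contributes $\ll x^{8/9+\varepsilon}$ — note your inequality should read $\eta \leq x^{-1/9}$, not $\geq$), decompose $d_3$ smoothly into dyadic pieces, use Voronoi summation to produce Kloosterman sums, then apply the Kuznetsov formula together with the spectral large-sieve. The bookkeeping differs in one respect: you write $d_3 = d * 1$, open the \emph{outer} divisor $d(am\pm h)=\sum_{e\mid am\pm h}1$, and apply Voronoi to the inner $d(m)$ over progressions modulo $e$; the paper instead keeps $d_3 = 1*1*1$ with $a,b$ as parameters, observes $m:=n+h\equiv h\pmod{ab}$, and applies Voronoi directly to the \emph{shifted} function $\alpha(m)$ over that progression. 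Both arrangements lead to Kloosterman sums amenable to Kuznetsov (yours via $S(\mp h\bar a, n; e)=S(\mp h, na; e)$), and the paper's choice has the incidental advantage of treating $\alpha=d$ and $\alpha=a$ in one stroke.

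There is, however, a genuine gap that your sketch does not address, and it is exactly the point the paper spends a separate subsection on: the \emph{exceptional Laplace eigenvalues} $0<\lambda_j<\tfrac14$. In the Kuznetsov formula these contribute through Bessel transforms evaluated at imaginary index, and Lemma \ref{lemma: bounds for the Bessel transforms of G} shows these transforms grow like $W^{-2\kappa}$ with $W\ll 1$. Estimating this naively with the Kim--Sarnak bound $\theta = \tfrac{7}{64}$ gives the error \eqref{eqn: first bound for Xi_exc} of size $\asymp x^{5/6+\theta+\varepsilon}/h^\theta$, which for small $h$ is $\approx x^{0.943}$ and strictly worse than $x^{8/9}$ — so the bound you outline would \emph{not} prove the theorem. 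The paper circumvents this (Section 3.4) by first applying Cauchy--Schwarz to separate the $\rho_j(h)$ factor from the sum $\Sigma_j^{\mathrm{exc.}}(M)$, and then bounding $\sum_{\kappa_j \text{ exc.}} |\rho_j(h)|^2 X^{4i\kappa_j}$ via the Iwaniec-type density estimate for exceptional eigenvalues recorded in Lemma \ref{lemma: lemma to treat the exceptional eigenvalues}; this exploits the sparsity of exceptional spectrum and downgrades the penalty from $x^\theta$ to $x^{\theta/3}$, which is dominated by the $x^{\omega/2}$ loss from the smoothing and lets the exponent $\tfrac{8}{9}$ go through. Your proposal needs to incorporate this amplification-plus-density argument (or an equivalent); the large-sieve inequalities alone, as you invoke them, do not suffice. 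A smaller inaccuracy: the uniformity in $h$ up to $x^{2/3}$ comes from the factor $\bigl(1 + h^{1/4}/x^{1/6}\bigr)$ contributed by bounds on the Fourier coefficients $\rho_j(h)$ at the shift, not primarily from decay of the Bessel transforms.
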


Let us also state the analogous result for the smoothed sum.
For a smooth function \( w : \mathbb{R} \rightarrow \mathbb{R} \), which is compactly supported in \( \left[ \frac12, 1 \right] \), define
\[ D_w^\pm(x; h) := \sum_n w\left( \frac nx \right) d_3(n) d(n \pm h). \]
Then we have the following

\begin{thm}\label{thm: main theorem for smoothed d(n)}
  We have for \( h \ll x^\frac23 \),
  \[ D_w^\pm(x; h) = x P_{w, h}(\log x) + \BigO{ x^{\frac56 + \frac\theta3 + \varepsilon} }, \]
  where \( P_{w, h} \) is a polynomial of degree three, and where the implied constants depend at most on \(w\) and \(\varepsilon\).
\end{thm}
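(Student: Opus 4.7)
The plan is to reduce the problem, via the convolution identity $d_3 = d \ast 1$, to a family of smoothed binary additive divisor sums twisted by a dilation parameter $a$, and then to treat each such sum by the classical spectral approach. First I would write
\[
D_w^\pm(x; h) = \sum_a \sum_m w\!\left(\frac{am}{x}\right) d(m)\, d(am \pm h),
\]
attach a smooth dyadic partition of unity on $a$, and use the symmetry of the definition $d_3(n) = \sum_{abc = n} 1$ to restrict to the range $a \leq A := x^{1/3}$ (the complementary range is absorbed by relabelling one of the other two variables of the convolution $1 \ast 1 \ast 1$).

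For each fixed $a \leq A$, the inner sum
\[
S_a^\pm(x; h) := \sum_m w\!\left(\frac{am}{x}\right) d(m)\, d(am \pm h)
\]
is a smoothed binary additive divisor problem with a dilation by $a$. To this I would apply the classical spectral scheme of Motohashi / Deshouillers--Iwaniec / Jutila: open one of the two divisor functions via the Dirichlet hyperbola method to obtain a sum in arithmetic progressions, apply Voronoi summation to dualize and produce Kloosterman sums, and decompose these spectrally via the Kuznetsov formula. The trivial and Eisenstein contributions on the spectral side, re-assembled and summed over $a$, will produce the polynomial main term $x P_{w, h}(\log x)$, while the discrete spectrum (cusp forms) contributes to the error.

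For the error, I would expect a cusp form contribution of size $x^{1/2 + \varepsilon} a^{\theta + \varepsilon}$ for each $a$, uniformly for $h \ll x^{2/3}$, where $\theta$ is the current best exponent towards the Ramanujan--Petersson conjecture; Hecke eigenvalues enter linearly in the Kuznetsov expansion, so this is essentially the Kim--Sarnak bound for a single Fourier coefficient at level depending on $a$. Summing over $a \leq A = x^{1/3}$ then yields
\[
\ll_\varepsilon x^{1/2 + \varepsilon}\, A^{1 + \theta} \ \ll\ x^{5/6 + \theta/3 + \varepsilon},
\]
matching the theorem and explaining the factor $\theta/3$ (rather than $\theta$) as the outcome of averaging over the dilation parameter.

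The principal obstacle is to make the spectral analysis of the inner sums uniform in the dilation $a$ and the shift $h$. This will require uniform bounds on the Bessel transforms of the test function appearing in Kuznetsov's formula, a careful handling of the moduli of the arithmetic progression sum (which depend on the divisors of $am \pm h$), and the spectral large-sieve inequality to control averages of Hecke eigenvalues over cusp forms of level up to $a$. Making sure that all implied constants depend only on $w$ and $\varepsilon$, and not on the shift $h$ within the allowed range, is where most of the technical work should reside.
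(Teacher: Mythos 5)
Your overall plan matches the paper's: decompose \(d_3\) so that the smallest convolution variable \(a\) satisfies \(a \ll x^{1/3}\) (the paper achieves this via a Meurman-type smooth inclusion--exclusion rather than a bare relabelling argument), rewrite the sum as \(d\) in arithmetic progressions modulo the product of the other two variables, apply Voronoi there, pass to Kloosterman sums and the Kuznetsov formula, and use the spectral large sieve to average over the dual frequency. The main term arises from the zeroth Voronoi term, and the \(\theta/3\) does come from the level in Kuznetsov being bounded by \(x^{1/3}\).

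The real gap is in your treatment of the exceptional spectrum. You propose ``essentially the Kim--Sarnak bound for a single Fourier coefficient at level depending on \(a\),'' i.e.\ a pointwise bound. The paper remarks explicitly that such a pointwise treatment only yields \(\ll x^{5/6+\theta+\varepsilon}h^{-\theta}\), which for small \(h\) is \(x^{5/6+\theta}\) and ``would weaken our result considerably.'' The sharper exponent \(\theta/3\) requires exploiting the sparsity of exceptional eigenvalues on average: one applies Cauchy--Schwarz to separate the \(h\)-dependent factor \(\rho_j(h)\) from the \(m\)-averaged factor, and then bounds the resulting second moment \(\sum_{\kappa_j\text{ exc.}}|\rho_j(h)|^2X^{4i\kappa_j}\) by Lemma~\ref{lemma: lemma to treat the exceptional eigenvalues}, which combines Kim--Sarnak on \(i\kappa_j\) with an averaged density estimate of Iwaniec--Kowalski type. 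This produces a per-dyadic-block bound of shape \(x^{1/2+\theta}A^{1-2\theta}\), increasing in \(A\) and giving \(x^{5/6+\theta/3}\) when maximized over \(A\ll x^{1/3}\). Your per-\(a\) estimate \(x^{1/2+\varepsilon}a^{\theta+\varepsilon}\) happens to sum to the right answer, but it corresponds neither to the pointwise treatment (which is strictly worse) nor to the averaged one (which has a different functional form); the extra gain \(x^\theta/A^{2\theta}\) coming from the second-moment device over the exceptional spectrum is precisely what turns \(\theta\) into \(\theta/3\), and without it the argument does not close.
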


By \(\theta\) we denote the bound in the Ramanujan-Petersson conjecture (see section \ref{subsection: The Kuznetsov trace formula and the Large sieve inequalities} for a precise definition).
With the currently best value for \(\theta\) we get an error term which is \( \ll x^\frac{7}{8} \), thus improving the result of Deshouillers \cite{Desh_MajMoySomKl}.

Our method applies as well to the dual sum
\[ D(N) := \sum_{n = 1}^{N - 1} d_3(n) d(N - n). \]
In contrast to the analogous sum with two binary divisor functions (see \cite[Theorem 2]{Moto_ThBinAddDivProb}), the main term in our case is a little bit more complicated.
Our result is

\begin{thm} \label{thm: main theorem for the dual sum of d(n)}
  We have for any \( \varepsilon > 0 \),
  \[ D(N) = M(N) + \BigO{ N^{\frac{11}{12} + \varepsilon} }, \]
  where the main term \( M(N) \) has the form
  \[ M(N) = N \sum_\twoln{ 0 \leq i, j, k, \ell \leq 3 }{ i + j + k + \ell \leq 3 } c_{i, j, k, \ell} F^{ (i, j, k, \ell) } (0, 0, 0, 0) \]
  with certain constants \( c_{i, j, k, \ell} \) and
  \[ F(\alpha, \beta, \gamma, \delta) := N^\alpha \sum_{d \mid N} \frac{ \chi_1(d) }{ d^{1 - \beta} } \sum_{c \mid d} \chi_2(c) \chi_3\left( \frac dc \right), \]
  where the arithmetic functions \( \chi_1 \), \( \chi_2 \) and \( \chi_3 \) are defined by
  \begin{align}
    \begin{gathered} \label{eqn: definition of chi_1, chi_2 and chi_3}
      \chi_1(n) := \prod_{p \mid n} \left( 1 - \frac1{ p^{3 - \gamma - \beta } - p^{ 1 - \gamma + \delta } - p^{1 - \gamma} + 1 } \right), \\
      \chi_2(n) := \prod_{p \mid n} \left( 1 + \frac1{ p^{2 - \beta - \delta} - p^{-\delta} - 1 } \right), \quad \chi_3(n) := \prod_{p \mid n} \left( 1 - \frac1{ p^{1 - \gamma - \delta} } \right).
    \end{gathered}
  \end{align}
  The implied constant depends only on \(\varepsilon\).
\end{thm}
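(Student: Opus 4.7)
My plan is to derive Theorem~\ref{thm: main theorem for the dual sum of d(n)} from the shifted convolution estimates of Theorems~\ref{thm: main theorem for d(n)} and~\ref{thm: main theorem for smoothed d(n)} by opening up the divisor function $d(N-n)$ through the hyperbola method, and then summing the resulting $d_3$-in-arithmetic-progression asymptotics over the modulus. Concretely, I would apply
\[
d(m) = 2 \sum_{\substack{a \mid m \\ a \leq \sqrt{m}}} 1 - \mathbf{1}_{\{m \text{ is a square}\}}
\]
with $m = N - n$; swapping the order of summation and using $d_3(n) \ll n^\varepsilon$ to dispense with the square contribution gives
\[
D(N) = 2 \sum_{a \leq \sqrt{N}} \sum_{\substack{1 \leq n \leq N - a^2 \\ n \equiv N \smod{a}}} d_3(n) + \BigO{N^{\frac12 + \varepsilon}}.
\]
The problem is thereby reduced to understanding a (smooth-cutoff) $d_3$-sum in an arithmetic progression modulo $a$, uniformly for $a$ up to $\sqrt{N}$.

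For each fixed $a$, the inner sum will be attacked by the same spectral apparatus that proves Theorem~\ref{thm: main theorem for smoothed d(n)}: I insert a dyadic partition of unity in $n$, apply a Voronoi summation to $d_3$ in the progression $N \smod{a}$, and bound the resulting Kloosterman expansion via the Kuznetsov formula. The diagonal term produces a main contribution of size $\tfrac{N}{a} Q_{a, N}(\log N)$ for a degree-two polynomial $Q_{a, N}$ whose coefficients depend multiplicatively on $a$ and on $\gcd(a, N)$, while the off-diagonal term is bounded spectrally.

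To express the main term in the closed form $M(N)$ of the theorem, I would introduce four Mellin shift parameters $(\alpha, \beta, \gamma, \delta)$ by replacing $d_3(n)$ and $d(N-n)$ with the twisted divisor functions $\sum_{abc = n} a^{-\alpha} b^{-\beta} c^{-\gamma}$ and $\sum_{de = N - n} d^{-\delta}$, and repeat the hyperbola reduction and main-term extraction for the twisted problem. The relevant generating series has a fourth-order pole at $(\alpha, \beta, \gamma, \delta) = 0$, coming from the four zeta-factors $\zeta(1+\alpha)\zeta(1+\beta)\zeta(1+\gamma)\zeta(1+\delta)$, and its Laurent expansion reproduces the derivatives $F^{(i,j,k,\ell)}(0,0,0,0)$ with $i+j+k+\ell \leq 3$. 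The multiplicative local factors $\chi_1, \chi_2, \chi_3$ of~\eqref{eqn: definition of chi_1, chi_2 and chi_3} emerge from resolving the Euler factors at primes $p \mid N$, where the arithmetic-progression condition constrains the local divisor sums.

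The main obstacle will be the uniformity of the spectral estimate in the modulus $a$: to end up with an error of size $N^{\frac{11}{12} + \varepsilon}$ after summation over $a \leq \sqrt{N}$, the per-$a$ bound must save a nontrivial power of $a$ in addition to the power of $N$, and the resulting balance is precisely what dictates the exponent $\tfrac{11}{12}$ (weaker than the $\tfrac{8}{9}$ of Theorem~\ref{thm: main theorem for d(n)}, reflecting the extra loss incurred by the summation over $a$). A secondary, more technical difficulty is the clean bookkeeping of local Euler factors across the four Mellin parameters needed to produce the precise form of $\chi_1, \chi_2, \chi_3$ specified in~\eqref{eqn: definition of chi_1, chi_2 and chi_3}.
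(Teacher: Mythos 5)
Your hyperbola reduction of $d(N-n)$ is a correct first step, but the plan then diverges from anything the paper's machinery can support, and at a critical juncture. After the swap you are left with $\sum_{a \leq \sqrt{N}} \sum_{n \equiv N \pmod a} d_3(n)$, and you propose to ``apply a Voronoi summation to $d_3$ in the progression $N \pmod a$, and bound the resulting Kloosterman expansion via the Kuznetsov formula.'' This does not fit together: the Voronoi formula for $d_3$ is a $\mathrm{GL}(3)$ summation formula, and its dual side involves hyper-Kloosterman sums $\mathrm{Kl}_3$, not the classical $\mathrm{GL}(2)$ sums $S(m,n;c)$. The Kuznetsov trace formula stated as Theorem~\ref{thm: Kuznetsov trace formula}, together with the large sieve inequalities of Theorem~\ref{thm: large sieve inequalities} and the exceptional-eigenvalue bounds of Lemma~\ref{lemma: lemma to treat the exceptional eigenvalues}, is a purely $\mathrm{GL}(2)$ toolkit; there is no spectral expansion of hyper-Kloosterman sums available here. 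So the ``same spectral apparatus that proves Theorem~\ref{thm: main theorem for smoothed d(n)}'' simply does not apply to the object you have produced. Once you have traded $d(N-n)$ for a pure congruence condition, there is no $\mathrm{GL}(2)$ Fourier coefficient left to dualize.

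The paper avoids this obstruction by keeping the roles the other way around. It does \emph{not} open $d(N-n)$; instead it keeps $\alpha(N-n)$ (with $\alpha = d$ or $\alpha = a$) intact and replaces the $n$-range by a smooth dyadic partition $\sum_i u_i(n) = 1$ on $(0, N-1)$. Within each dyadic piece it applies the same decomposition $d_3(n) = \sum_{abc = n} h(a,b,c)$ of \eqref{eqn: decomposition of d_3(n)} used for Theorems~\ref{thm: main theorem for d(n)} and~\ref{thm: main theorem for smoothed d(n)}, writes $m = N - abc$ so that $m \equiv N \pmod{ab}$, and then invokes Theorem~\ref{thm: Voronoi summation for d(n) in arithmetic progressions} on $d(m)$ in the progression modulo $ab$. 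This yields ordinary $\mathrm{GL}(2)$ Kloosterman sums $S(N, \pm m; c)$ to which Kuznetsov and the large sieve do apply, and the diagonal term $\lambda_{N, ab}$ produces the Ramanujan sums $c_d(N)$ that ultimately become the $\chi_1, \chi_2, \chi_3$ of \eqref{eqn: definition of chi_1, chi_2 and chi_3}. In short: the correct decomposition is smooth-dyadic in $n$ and multiplicative in $d_3$, with Voronoi applied to the degree-two factor, not a hyperbola split on $d(N-n)$ followed by a $\mathrm{GL}(3)$ dualization. Your worry about uniformity in $a$ up to $\sqrt{N}$ is real but is a symptom of having chosen the harder reduction; in the paper's approach the modulus is $ab \ll x^{2/3}$ and the loss that yields $11/12$ rather than $8/9$ comes instead from the dyadic pieces with $n$ (or $N - n$) small, where the Bessel transforms in Lemma~\ref{lemma: estimates for the Bessel transforms of oscillating functions for alpha near 1} give weaker savings and the final estimate $R_{ABC}^\pm(M) \ll N^{11/12 + \varepsilon} + N^{4/3 + \varepsilon} x^{-1/2}$ is only used for $x \gg N^{8/9}$.
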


In particular, we have as leading term
\[ D(N) = \left( 1 +  o(1) \right) C_0 C(N) N \log^3 N, \]
where the constant is given by
\[ C_0 := \frac3{\pi^2} \prod_p \left( 1 - \frac1{ p(p + 1) } \right), \]
and where \( C(N) \) is a multiplicative function defined on prime powers by
\[ C\left( p^k \right) := 1 + \left( 1 - \frac1{ p^k } \right) \frac{ 2 p^2 + 2p - 1}{ p^3 - 2p + 1 } - \frac k{p^k} \frac{p + 1}{ (p^2 + p - 1) }. \]

Let \( \varphi(z) \) be a holomorphic cusp form of weight \( \kappa \) for the modular group \( \operatorname{SL}_2( \mathbb{Z} ) \).
Let \( a(n) \) be its normalized Fourier coefficients, so that \( \varphi(z) \) has the Fourier expansion
\[ \varphi(z) = \sum_{n = 1}^\infty a(n) n^\frac{ \kappa - 1 }2 e(nz). \]
The divisor function and the Fourier coefficients \( a(n) \) share a lot of similarities in their behaviour, so one might expect to get analogous results as in Theorems \ref{thm: main theorem for d(n)} and \ref{thm: main theorem for smoothed d(n)} for the sums
\[ A^+(x; h) := \sum_{n \leq x} d_3(n) a(n + h) \quad \text{and} \quad A^-(x; h) := \sum_{n \leq x} d_3(n + h) a(n), \quad h \geq 1, \]
and
\[ A_w^\pm(x; h) := \sum_n w\left( \frac nx \right) d_3(n) a(n \pm h), \]
with the difference that now we cannot expect a main term to appear anymore.
Indeed, Pitt \cite{Pitt_ShiftConvZetaAutLFunc} and Munshi \cite{Mun_ShiftConvDivFuncRamTauFunc} already obtained results of this sort.
Using our method we will be able to partially improve their results by showing

\begin{thm}\label{thm: main theorem for a(n)}
  We have for \( h \ll x^\frac23 \),
  \[ A^\pm(x; h) \ll x^{ \frac89 + \varepsilon } \quad \text{and} \quad A_w^\pm(x; h) \ll x^{\frac56 + \frac\theta3 + \varepsilon}, \]
  where the implied constants depend at most on \(w\), on the holomorphic cusp form \( \varphi(z) \) and on \(\varepsilon\).
\end{thm}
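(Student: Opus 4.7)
The plan is to imitate, step by step, the proofs of Theorems \ref{thm: main theorem for d(n)} and \ref{thm: main theorem for smoothed d(n)}, replacing the divisor function $d(n)$ by the Hecke eigenvalues $a(n)$.  Writing $d_3 = 1 \ast 1 \ast 1$ and inserting smooth dyadic partitions of unity in each of the three variables, the smoothed sum $A_w^{\pm}(x;h)$ is reduced---at the cost of $O((\log x)^3)$ pieces---to bilinear sums of the form
\[
  \sum_{a, b, n} W_1\!\left( \tfrac aA \right) W_2\!\left( \tfrac bB \right) w\!\left( \tfrac nN \right) a(abn \pm h),
  \qquad ABN \asymp x.
\]
The innermost sum over $n$ is then transformed by Voronoi summation for the cusp form $\varphi$, reproducing the same coefficients $a(\cdot)$ in a dual sum whose additive phases, after the outer variables $a$ and $b$ have been summed, assemble into Kloosterman sums.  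These are opened by the Kuznetsov trace formula, and the resulting spectral averages are bounded by the large sieve inequalities recalled in \S\ref{subsection: The Kuznetsov trace formula and the Large sieve inequalities}, exactly as in the proof of Theorem \ref{thm: main theorem for smoothed d(n)}.  The passage from the smoothed $A_w^{\pm}$ to the sharp cut-off $A^{\pm}$ is then handled by the same $\Delta$-approximation argument as for Theorem \ref{thm: main theorem for d(n)}, the transition zone $[x - \Delta, x + \Delta]$ being absorbed trivially using $|a(n)| \ll_{\varepsilon} n^{\varepsilon}$.

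The key structural difference from the divisor case is the absence of a main term.  In Theorems \ref{thm: main theorem for d(n)} and \ref{thm: main theorem for smoothed d(n)} the polynomials $P_h$ and $P_{w, h}$ arise from residues of $\zeta(s)^2$ at $s = 1$, which enter through the Voronoi formula for $d(n)$ and through integrating against the continuous Eisenstein spectrum.  Since $L(s, \varphi)$ is entire, all analogous residues vanish; only the spectral error survives and produces the stated bounds $\BigO{ x^{8/9 + \varepsilon} }$ and $\BigO{ x^{5/6 + \theta/3 + \varepsilon} }$.  All the individual and mean-square estimates for $d(n)$ used in the original argument---namely $d(n) \ll n^{\varepsilon}$ and $\sum_{n \leq X} d(n)^2 \ll X^{1 + \varepsilon}$---have exact counterparts for $a(n)$ (Deligne's bound and the Rankin-Selberg estimate), so every analytic step transfers verbatim with the same exponents.

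The main obstacle is bookkeeping rather than analysis: one has to verify that no step in the long chain of manipulations implicitly uses the multiplicative factorisation $d = 1 \ast 1$, which would force a separate Rankin-Selberg convolution for $\varphi$ and might disturb the balance in the large-sieve estimates that yields the exponents $\tfrac89$ and $\tfrac56 + \tfrac{\theta}3$.  Given the structural parallel between $d(n)$ and $a(n)$ at the level of individual and moment bounds, this verification is expected to be essentially mechanical, and the resulting bounds coincide exactly with those of Theorems \ref{thm: main theorem for d(n)} and \ref{thm: main theorem for smoothed d(n)}.
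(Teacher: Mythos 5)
Your proposal matches the paper's approach: Section \ref{section: main proof} proves Theorems \ref{thm: main theorem for d(n)}, \ref{thm: main theorem for smoothed d(n)} and \ref{thm: main theorem for a(n)} simultaneously, using $\alpha(n)$ as a placeholder for $d(n)$ or $a(n)$, invoking Theorem \ref{thm: Voronoi summation for d(n) in arithmetic progressions} or \ref{thm: Voronoi summation for a(n) in arithmetic progressions} accordingly, then applying Kuznetsov and the large sieve, with the sharp cut-off handled by the parameter $\omega$ built into the weight $w$ (choosing $\omega = 1/9$ for the first bound, $\omega = 0$ for the second). One inaccuracy to flag: the main term in the divisor case arises solely from the constant-term integral $\frac1c \int \lambda_{b,c}(\xi) f(\xi)\,d\xi$ in the Voronoi formula for $d(n)$ in arithmetic progressions, not from the Eisenstein integral in the Kuznetsov formula (which only contributes to the error); it disappears for $\alpha = a$ because the cuspidal Voronoi formula has no such constant term --- this is the residue-free phenomenon you correctly attribute to $L(s, \varphi)$ being entire. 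Also note that the appearance of $\theta/3$ rather than $\theta$ in the smoothed bound is not automatic from the large sieve: it requires the separate Cauchy--Schwarz treatment of the exceptional spectrum via Lemma \ref{lemma: lemma to treat the exceptional eigenvalues}, which you should make explicit rather than absorb into ``exactly as in Theorem \ref{thm: main theorem for smoothed d(n)}.''
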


Of course the dual sum
\[ A(N) := \sum_{n = 1}^{N - 1} d_3(n) a(N - n), \]
can be treated as well.

\begin{thm} \label{thm: main theorem for the dual sum of a(n)}
  We have
  \[ A(N) \ll N^{\frac{11}{12} + \varepsilon}, \]
  where the implied constant depends only on \(\varepsilon\).
\end{thm}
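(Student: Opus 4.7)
The plan is to follow the template of the proof of Theorem~\ref{thm: main theorem for the dual sum of d(n)}, simplified by the fact that the holomorphic cusp form $\varphi$ is orthogonal to the constant function and to the Eisenstein/continuous spectrum, so that no main term appears. I would first split $A(N) = A^{(1)}(N) + A^{(2)}(N)$ according to whether $n \le N/2$ or $n > N/2$; in the second range, the substitution $m = N - n$ reduces $A^{(2)}$ to a sum of the same shape but with the roles of $d_3$ and $a$ interchanged. Applying a smooth dyadic partition of unity in the smaller variable, it then suffices to estimate, for every dyadic scale $X \le N/2$, smoothed sums of the form
\[ A_{w,X}(N) := \sum_n w\!\left( \frac{n}{X} \right) d_3(n) \, a(N - n), \]
with $w$ a fixed bump supported in $[1/2,1]$, together with the analogous sums with $d_3$ and $a$ interchanged.

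For dyadic scales $X \le Y$, where $Y$ is a threshold to be optimized, I would invoke the trivial bound furnished by Deligne's estimate $|a(m)| \ll m^\varepsilon$ and the standard mean value of $d_3$, yielding $A_{w,X}(N) \ll X \, N^\varepsilon$. For the larger scales $Y < X \le N/2$ I would apply the spectral method: expanding $d_3 = 1 \ast d$ and then applying the Voronoi summation formula on the cusp-form side transforms $A_{w,X}(N)$ into weighted sums of Kloosterman sums. Kuznetsov's trace formula converts these into spectral sums over the cuspidal and continuous spectrum, which are bounded via the large sieve inequalities recalled in Section~\ref{subsection: The Kuznetsov trace formula and the Large sieve inequalities}. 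Because $\varphi$ is cuspidal, the diagonal and residual-spectrum contributions that produced the polynomial main term in Theorem~\ref{thm: main theorem for the dual sum of d(n)} now vanish, leaving only error terms. Balancing the trivial and spectral estimates and taking $Y = N^{11/12}$ gives the claimed bound.

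The hardest step will be the spectral analysis of $A_{w,X}(N)$ in the regime where $X$ is close to $N/2$: here the effective shift $N$ is comparable to the summation length $X$, which lies outside the range of uniformity $h \ll x^{2/3}$ in Theorem~\ref{thm: main theorem for a(n)}, so the Voronoi--Kuznetsov analysis must be carried out afresh. The crux is to control the asymptotic behavior of the Bessel transforms arising from Voronoi summation when their arguments can be of several different orders of magnitude, and to extract a power saving uniform over all dyadic scales $X$ in order to reach the exponent $\tfrac{11}{12}$.
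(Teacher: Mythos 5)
Your overall strategy matches the paper's: localize the small variable dyadically, bound the small scales trivially via Deligne's estimate, and treat the large scales by Voronoi summation on the cusp-form coefficients followed by the Kuznetsov formula and the large sieve, with no main term since Theorem~\ref{thm: Voronoi summation for a(n) in arithmetic progressions} has no polynomial part. Your split at $N/2$ with the substitution $m = N - n$ is a workable variant of the paper's smooth partition $\sum_i u_i = 1$ over all dyadic scales.

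There are, however, two genuine gaps. The central one is that you explicitly flag but do not carry out what you yourself call ``the hardest step'': the Bessel-transform estimates in the regime where the shift $N$ and the summation length $x$ are comparable. This is precisely where the content of Section 4 of the paper lies --- Lemma~\ref{lemma: estimates for the Bessel transforms of oscillating functions for alpha near 1} together with the Kuznetsov and large-sieve analysis is used to establish $R_{ABC}^{\pm}(M) \ll N^{11/12+\varepsilon} + N^{4/3+\varepsilon} x^{-1/2}$; only with this bound in hand does balancing against the trivial estimate close the argument. Your threshold $Y = N^{11/12}$ is acceptable given this bound (any $Y$ between $N^{5/6}$ and $N^{11/12}$ yields the exponent $11/12$; the paper takes $N^{8/9}$, the actual balance point), but without the spectral estimate the balance has nothing to stand on.

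The second gap concerns the decomposition $d_3 = 1 \ast d$. Fixing the small factor $a$ in $d_3(n) = \sum_{a \mid n} d(n/a)$ leaves the non-smooth weight $d(n/a)$ attached to $a(N-n)$ over the progression $n \equiv 0 \pmod{a}$, so Theorem~\ref{thm: Voronoi summation for a(n) in arithmetic progressions} cannot be applied directly; you would be pushed towards a second Voronoi on the $d$-factor, which leads to the more complicated exponential-sum picture of Pitt and Munshi rather than to a clean Kuznetsov input. The paper instead uses the smooth ternary decomposition \eqref{eqn: decomposition of d_3(n)}: two small factors supply the modulus for the progression, and the large factor is absorbed into the progression variable $m = N - abc$, so that a single smooth weight is presented to Voronoi. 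Finally, a minor misattribution: the main term in Theorem~\ref{thm: main theorem for the dual sum of d(n)} comes from the $\lambda_{N,ab}$ term in Theorem~\ref{thm: Voronoi summation for d(n) in arithmetic progressions}, not from a diagonal or residual-spectrum contribution in the Kuznetsov formula, although your conclusion that the cusp-form case produces no main term is of course correct.
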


As in \cite{BykVino_InhomConv} and \cite{Desh_MajMoySomKl}, our main ingredient is the Kuznetsov trace formula, which enables us to exploit the cancellation between Kloosterman sums.
This approach yields much better error terms than by using results from algebraic geometry to bound complicated exponential sums individually, as it is done in the other works \cite{FriedIwan_IncomplKlSumsDivProb}, \cite{HB_DivFuncArithProg}, \cite{Mun_ShiftConvDivFuncRamTauFunc} and \cite{Pitt_ShiftConvZetaAutLFunc} on \( D^\pm(x; h) \) and \( A^\pm(x; h) \), which give power-saving error terms.

\section{Prerequisites}

Note that \( \varepsilon \) always stands for some positive real number, which can be chosen arbitrarily small.
However, it need not be the same on every occurrence, even if it appears in the same equation.
To avoid confusion we also want to recall that as usually \( e(q) := e^{2\pi i q} \), and that
\[ S(a, b; c) := \sum_\twoln{d \smod c}{ (d, c) = 1 } \emod{ ad + b \overline{d} }c \quad \text{and} \quad c_q(n) := \sum_\twoln{d \smod q}{ (d, q) = 1 } \emod{dn}q, \]
which are the usual notations for Kloosterman sums and Ramanujan sums.
For Kloosterman sums we have Weil's bound,
\[ |S(a, b; c)| \leq d(c) (a, b, c)^\frac12 c^\frac12, \]
while for Ramanujan sums it is well-known that
\[ |c_q(n)| \leq (n, q). \]

\subsection{The Voronoi summation formula and Bessel functions}

Using the well-known Voronoi formula for the divisor function (see \cite[Chapter 4.5]{IwaKow_ANT} or \cite[Theorem 1.6]{Jut_LectMethThExpSums}) and the identity
\[ \sum_\twoln{n = 1}{ n \equiv b \smod c }^\infty d(n) f(n) = \frac1c \sum_{d \mid c} \sum_\twoln{\ell \smod d}{ (\ell, d) = 1 } \emod{-b\ell}d \sum_{n = 1}^\infty d(n) f(n) \emod{n\ell}d, \]
it is not hard to show the following summation formula for the divisor function in arithmetic progressions:

\begin{thm}\label{thm: Voronoi summation for d(n) in arithmetic progressions}
  Let \(b\) and \( c \geq 1 \) be integers.
  Let \( f : (0, \infty) \rightarrow \mathbb{R} \) be smooth and compactly supported.
  Then
  \begin{align*}
    \sum_\twoln{n = 1}{ n \equiv b \smod c }^\infty d(n) f(n) &= \frac1c \int \! \lambda_{b, c}(\xi) f(\xi) \, d\xi \\
      &\qquad -\frac{2 \pi}c \sum_{d \mid c} \sum_{n = 1}^\infty d(n) \frac{ S(b, n; d) }d \int \! Y_0\left( \frac{4 \pi}d \sqrt{n \xi} \right) f(\xi) \, d\xi \\
      &\qquad +\frac4c \sum_{d \mid c} \sum_{n = 1}^\infty d(n) \frac{ S(b, -n; d) }d \int \! K_0\left( \frac{4 \pi}d \sqrt{n \xi} \right) f(\xi) \, d\xi,
  \end{align*}
  with
  \[ \lambda_{b, c}(\xi) = \frac{ \varphi_b(c) }c \log\xi + 2\gamma \frac{ \varphi_b(c) }c - 2 \frac{ \varphi_b'(c) }c, \]
  where
  \[ \varphi_b(c) := c \sum_{d \mid c} \frac{ c_d(b) }d, \quad \text{and} \quad \varphi_b'(c) := c \sum_{d \mid c} \frac{ c_d(b) \log d }d. \]
\end{thm}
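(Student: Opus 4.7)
The plan is to follow the route indicated by the hint: first detect the congruence condition \( n \equiv b \smod c \) via additive characters to reduce to twisted sums \( \sum_n d(n) e(n\ell/d) f(n) \), then apply the classical Voronoi formula for each such twist, and finally recombine over \( \ell \smod d \) and \( d \mid c \). Starting from the orthogonality relation \( \mathbf{1}[n \equiv b \smod c] = \tfrac1c \sum_{\ell \smod c} e((n-b)\ell/c) \), I would rewrite \( \ell/c \) in lowest terms as \( \ell'/d \) with \( d \mid c \) and \( (\ell', d) = 1 \) to obtain exactly the identity quoted in the excerpt. This step is purely algebraic and requires no analytic input.

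Next, for every fixed divisor \( d \mid c \) and every \( \ell \smod d \) with \( (\ell, d) = 1 \), I would invoke the standard Voronoi formula in the form stated in \cite[Chapter 4.5]{IwaKow_ANT}, namely
\begin{align*}
  \sum_{n \geq 1} d(n)\,e(n\ell/d)\,f(n) &= \frac1d \int_0^\infty \bigl( \log(\xi/d^2) + 2\gamma \bigr) f(\xi)\,d\xi \\
    &\qquad - \frac{2\pi}d \sum_{n \geq 1} d(n)\,e(-\overline{\ell}n/d) \int_0^\infty Y_0\!\left( \tfrac{4\pi}d \sqrt{n\xi} \right) f(\xi)\,d\xi \\
    &\qquad + \frac{4}d \sum_{n \geq 1} d(n)\,e(\overline{\ell}n/d) \int_0^\infty K_0\!\left( \tfrac{4\pi}d \sqrt{n\xi} \right) f(\xi)\,d\xi.
\end{align*}
Since \( f \) is smooth and compactly supported, the Bessel sums converge absolutely after a standard integration by parts, so all manipulations are legal.

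Now I would weight each of these identities by \( \tfrac1c\,e(-b\ell/d) \) and sum over \( \ell \) and \( d \). In the main term, the inner \( \ell \)-sum of \( e(-b\ell/d) \) collapses to the Ramanujan sum \( c_d(b) \); combining with the prefactors \( \log(\xi/d^2) + 2\gamma = \log\xi + 2\gamma - 2\log d \) and inserting the definitions of \( \varphi_b(c) \) and \( \varphi_b'(c) \) immediately yields \( \tfrac1c \int \lambda_{b,c}(\xi) f(\xi)\,d\xi \). In the dual terms, the sums
\[ \sum_\twoln{\ell \smod d}{(\ell,d)=1} e\!\left(\tfrac{-b\ell \mp \overline{\ell} n}d\right) \]
produce \( S(b, \pm n; d) \) after the substitution \( \ell \mapsto -\ell \) (using \( S(-a, -b; c) = S(a, b; c) \)), giving exactly the \( Y_0 \)- and \( K_0 \)-sums in the statement.

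The only place where care is required is the bookkeeping of signs in the Kloosterman arguments: one must verify that the minus sign on \( b\ell \) together with the conventions \( e(\mp \overline{\ell}n/d) \) for the \( Y_0 \)- and \( K_0 \)-dual in the Voronoi formula lead to \( S(b, n; d) \) in the \( Y_0 \)-part and \( S(b, -n; d) \) in the \( K_0 \)-part, as stated. Apart from this, the proof is mechanical, and I do not anticipate any genuine technical obstacle.
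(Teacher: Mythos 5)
Your proposal is correct and follows exactly the route the paper indicates (the paper does not spell out a proof but says the theorem follows from the classical Voronoi formula of \cite[Chapter 4.5]{IwaKow_ANT} together with the displayed additive-character identity). The bookkeeping of the Kloosterman sums via \( S(-a,-b;c) = S(a,b;c) \) and \( S(-a,b;c) = S(a,-b;c) \) is exactly right, and your reduction of \( \ell/c \) to lowest terms \( \ell'/d \) reproduces the paper's quoted identity, so there is nothing to add.
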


In the same way, an analogous formula for Fourier coefficients of holomorphic cusp forms can be obtained by using the corresponding Voronoi formula (see \cite[Theorem 1.6]{Jut_LectMethThExpSums}):

\begin{thm}\label{thm: Voronoi summation for a(n) in arithmetic progressions}
  Let \(b\) and \( c \geq 1 \) be integers.
  Let \( f : (0, \infty) \rightarrow \mathbb{R} \) be smooth and compactly supported.
  Then
  \[ \sum_\twoln{n = 1}{ n \equiv b \smod c } a(n) f(n) = (-1)^\frac\kappa2 \frac{2 \pi}c \sum_{d \mid c} \sum_{n = 1}^\infty a(n) \frac{ S(b, n; d) }d \int_0^\infty \! J_{\kappa - 1} \left( 4\pi \frac{ \sqrt{n \xi} }d \right) f(\xi) \, d\xi. \]
\end{thm}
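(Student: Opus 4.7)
The plan is to imitate exactly the derivation used for the divisor function: detect the congruence $n \equiv b \pmod c$ by additive characters, split the resulting Ramanujan-type sum over divisors $d \mid c$, and then apply the standard Voronoi summation formula for $a(n)$ in the cusp to each of the resulting sums twisted by an additive character $e(n\ell/d)$ with $(\ell,d) = 1$.

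Concretely, I would start from the orthogonality relation
\[ \mathbb{1}_{n \equiv b \smod c} = \frac1c \sum_{t \smod c} \emod{t(n - b)}c, \]
and re-group the sum over $t$ according to the reduced denominator: writing $t/c = \ell/d$ with $d \mid c$ and $(\ell, d) = 1$, we obtain exactly the identity quoted in the excerpt, namely
\[ \sum_\twoln{n = 1}{n \equiv b \smod c}^\infty a(n) f(n) = \frac1c \sum_{d \mid c} \sum_\twoln{\ell \smod d}{(\ell, d) = 1} \emod{-b\ell}d \sum_{n = 1}^\infty a(n) f(n) \emod{n\ell}d. \]
Next I would apply the Voronoi formula for $a(n)$ stated as Theorem 1.6 of Jutila, which for $(\ell, d) = 1$ reads
\[ \sum_{n = 1}^\infty a(n) \emod{n\ell}d f(n) = \frac{2\pi i^{-\kappa}}d \sum_{n = 1}^\infty a(n) \emod{-n \overline{\ell}}d \int_0^\infty \! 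J_{\kappa - 1}\left( \frac{4\pi \sqrt{n\xi}}d \right) f(\xi) \, d\xi, \]
where $\overline{\ell}$ denotes the inverse of $\ell$ modulo $d$.

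Substituting this into the previous formula and interchanging the $\ell$-sum with the $n$-sum (justified by absolute convergence, since $f$ is compactly supported and the Bessel transform of $f$ decays rapidly in $n$), the inner sum over $\ell$ becomes
\[ \sum_\twoln{\ell \smod d}{(\ell, d) = 1} \emod{-b\ell - n\overline{\ell}}d = S(-b, -n; d) = S(b, n; d), \]
the last equality following from the substitution $\ell \mapsto -\ell$ in the definition of the Kloosterman sum. Finally, since $\kappa$ is even we have $i^{-\kappa} = (-1)^{\kappa/2}$, and collecting the factors yields exactly the formula in the statement.

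There is no real obstacle here: the argument is a routine application of additive orthogonality followed by the Voronoi formula, the only points requiring a bit of care being the identification of the $\ell$-sum as a Kloosterman sum and the bookkeeping of the root number $i^{-\kappa}$. Convergence issues are immaterial because $f$ is smooth and compactly supported in $(0, \infty)$, so both sides are absolutely convergent after Voronoi inversion (the $J_{\kappa-1}$-transform of $f$ decays faster than any power of $n$).
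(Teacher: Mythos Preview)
Your proposal is correct and follows exactly the approach indicated in the paper: detect the congruence by additive characters to obtain the stated identity, then apply Jutila's Voronoi formula for cusp-form coefficients and recognize the resulting $\ell$-sum as the Kloosterman sum $S(b,n;d)$. The paper does not spell out any of these details (it just says ``in the same way'' as for $d(n)$), so you have faithfully reconstructed the intended argument.
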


Here we also want to recall the bounds
\[ d(n) \ll n^\varepsilon \quad \text{and} \quad a(n) \ll n^\varepsilon, \]
the latter following from the Ramanujan-Petersson conjecture proven by Deligne.

Concerning the Bessel function appearing in Theorems \ref{thm: Voronoi summation for d(n) in arithmetic progressions} and \ref{thm: Voronoi summation for a(n) in arithmetic progressions}, we want to sum up some well-known facts.
We know that
\begin{align*}
  K_0(\xi) \ll | \log\xi | \quad \text{for \( \xi \ll 1 \),} \quad \text{and} \quad K_0 \ll \frac1{ e^\xi \sqrt{\xi} } \quad \text{for \( \xi \gg 1 \),}
\end{align*}
and that for \( \mu \geq 1 \),
\[ K_0^{ (\mu) }(\xi) \ll \frac1{ \xi^\mu } \quad \text{for \( \xi \ll 1 \),} \quad \text{and} \quad K_0^{ (\mu) } \ll \frac1{ e^\xi \sqrt{\xi} } \quad \text{for \( \xi \gg 1 \).} \]
Regarding the other two Bessel functions, we have for \( \nu \geq 0 \) and \( \xi \ll 1 \),
\[ J_\nu(\xi) \ll \xi^\nu \quad \text{and} \quad J_\nu^{ (\mu) } \ll \xi^{\nu - \mu} \quad \text{for } \mu \geq 0, \]
and for \( \nu \geq 1 \) and \( \xi \ll 1 \),
\[ Y_0(\xi) \ll | \log\xi |, \quad Y_\nu(\xi) \ll \frac1{ \xi^\nu }, \quad \text{and} \quad Y_0^{ (\mu) } \ll \frac1{ \xi^\mu }, \quad Y_\nu^{ (\mu) } \ll \frac1{ \xi^{\nu + \mu} } \quad \text{for } \mu \geq 1. \]
Finally, for \( \nu \geq 0 \) and \( \xi \gg 1 \), it is known that
\[ J_\nu^{ (\mu) }(\xi), Y_\nu^{ (\mu) }(\xi) \ll \frac1{ \sqrt{\xi} } \quad \text{for \( \mu \geq 0 \).} \]

From the recurrence relations
\begin{align} \label{eqn: recurrence relations for Bessel functions}
  \left( \xi^\nu B_\nu(\xi) \right)' = \xi^\nu B_{\nu - 1}(\xi) \quad \text{and} \quad B_{\nu - 1}(\xi) - B_{\nu + 1}(\xi) = 2 B_\nu'(\xi),
\end{align}
which are true for \( B_\nu(\xi) = J_\nu(\xi) \) or \( B_\nu(\xi) = Y_\nu(\xi) \), we get the identity
\begin{align}\label{eqn: consequence of the recurrence relations}
  \int \! B_0\left( \frac{4\pi}c \sqrt{h \xi} \right) f(\xi) \, d\xi = \left( \frac{-2c}{ 4\pi \sqrt{h} } \right)^\nu \int \! \xi^\frac\nu2 B_\nu\left( \frac{4\pi}c \sqrt{h \xi} \right) \frac{\partial^\nu f}{\partial \xi^\nu}(\xi) \, d\xi,
\end{align}
which will be useful later.
These Bessel functions oscillate for large values, and to make use of this behaviour we have the following

\begin{lemma} \label{lemma: Bessel functions as oscillating functions}
  For any \( \nu \geq 0 \) there are smooth functions \( v_J, v_Y: (0, \infty) \rightarrow \mathbb{C} \) such that
  \begin{align}
    J_\nu(\xi) &= 2\Re\left( \e\left( \frac \xi{2\pi} \right) v_J\left( \frac \xi\pi \right) \right), \\
    Y_\nu(\xi) &= 2\Re\left( \e\left( \frac \xi{2\pi} \right) v_Y\left( \frac \xi\pi \right) \right), \label{eqn: representation of Y_0}
  \end{align}
  and such that for any \( \mu \geq 0 \),
  \begin{align}
    v_J^{ (\mu) }, v_Y^{ (\mu) } \ll \frac1{ \xi^{\mu + \frac12} } \quad \text{for } \xi \gg 1. \label{eqn: estimates for v_J and v_Y}
  \end{align}
\end{lemma}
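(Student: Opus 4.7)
My plan is to construct $v_J$ and $v_Y$ directly from the first Hankel function $H_\nu^{(1)}(\xi) = J_\nu(\xi) + i Y_\nu(\xi)$, exploiting that both $J_\nu$ and $Y_\nu$ are real-valued on $(0, \infty)$ for real $\nu$. Defining
\[ v_J(t) := \tfrac{1}{2}\, e^{-i\pi t}\, H_\nu^{(1)}(\pi t) \qquad \text{and} \qquad v_Y(t) := -\tfrac{i}{2}\, e^{-i\pi t}\, H_\nu^{(1)}(\pi t), \]
the substitution $t = \xi/\pi$ gives $e(\xi/(2\pi))\, v_J(\xi/\pi) = \tfrac{1}{2} H_\nu^{(1)}(\xi)$ and $e(\xi/(2\pi))\, v_Y(\xi/\pi) = -\tfrac{i}{2} H_\nu^{(1)}(\xi)$, whose doubled real parts are exactly $J_\nu(\xi)$ and $Y_\nu(\xi)$. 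Smoothness of $v_J$ and $v_Y$ on $(0, \infty)$ is inherited from that of $H_\nu^{(1)}$.

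It remains to verify the derivative bounds \eqref{eqn: estimates for v_J and v_Y}. By the chain rule these reduce to
\[ W_\nu^{(\mu)}(\xi) \ll \xi^{-\mu - \frac{1}{2}} \quad (\xi \gg 1), \qquad \text{where } W_\nu(\xi) := e^{-i\xi}\, H_\nu^{(1)}(\xi). \]
For this I would invoke the Poisson--Hankel integral representation
\[ W_\nu(\xi) = \sqrt{\tfrac{2}{\pi\xi}}\, \frac{e^{-i(\nu\pi/2 + \pi/4)}}{\Gamma(\nu + \tfrac{1}{2})} \int_0^\infty e^{-t}\, t^{\nu - \frac{1}{2}} \left( 1 + \frac{it}{2\xi} \right)^{\nu - \frac{1}{2}}\! dt, \]
which is valid for every $\nu \geq 0$ (since then $\nu + \tfrac{1}{2} > 0$) and every $\xi > 0$. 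The $\xi$-dependence enters only through the explicit prefactor $\xi^{-1/2}$ and through the factor $(1 + it/(2\xi))^{\nu - 1/2}$ inside the integrand; each $\xi$-derivative of the latter produces a gain of $t/\xi^2$ while keeping the modulus bounded by a $\nu$-dependent constant, because $1 + it/(2\xi)$ stays in the right half-plane. Differentiating $\mu$ times under the integral sign and applying Leibniz's rule, every resulting term carries the total weight $\xi^{-\mu - 1/2}$ times a convergent integral of the form $\int_0^\infty e^{-t} t^{\nu + k - 1/2}\, dt = \Gamma(\nu + k + \tfrac{1}{2})$ with $0 \leq k \leq \mu$.

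The main technical point, though essentially routine, is to control the $\mu$-th $\xi$-derivative of $(1 + it/(2\xi))^{\nu - 1/2}$ by a polynomial in $t$ of degree $\mu$ times $\xi^{-2\mu}$, uniformly for $\xi \geq 1$ and $t \geq 0$, so that Leibniz's expansion can be assembled term by term. An alternative and less unified route would be to start instead from the standard asymptotic expansion of $H_\nu^{(1)}$ at infinity together with derivative bounds on the remainder, which yields the same estimate after differentiation; the integral representation has the advantage of handling all $\nu \geq 0$ and all $\mu \geq 0$ in a single argument.
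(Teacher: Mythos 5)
Your proof takes a genuinely different route from the paper's, and it is essentially correct. The paper begins from integral representations of $J_0$ and $Y_0$ found in Gradshteyn--Ryzhik, applies the Deshouillers--Iwaniec substitution $y = \sqrt x/(2\pi) - \xi/(2\sqrt x)$ to convert them into Fresnel-type integrals $\int \e(y^2)(y^2 + \xi/\pi)^{-1/2}\,dy$, proves the decay by splitting that integral at $1$ and integrating by parts, and finally transfers to $\nu \geq 1$ via the recurrence relations \eqref{eqn: recurrence relations for Bessel functions}. You instead define $v_J$ and $v_Y$ directly from the Hankel function $H_\nu^{(1)} = J_\nu + iY_\nu$ and derive the decay from the Poisson-type representation $W_\nu(\xi) = e^{-i\xi}H_\nu^{(1)}(\xi) = C\nu\,\xi^{-1/2}\int_0^\infty e^{-t}t^{\nu-\frac12}(1 + it/(2\xi))^{\nu - \frac12}\,dt$, valid for all $\nu > -\tfrac12$. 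Your route has the advantage of handling every $\nu \geq 0$ in a single step and of producing a visibly smooth, holomorphic-in-$\xi$ integral representation for $W_\nu$, whereas the paper's route is shorter to state but has to bootstrap from $\nu = 0$ via recurrences; the structural identity $v_Y = -i\,v_J$ is also transparent in your construction.

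One intermediate claim is slightly too strong and should be weakened to keep the argument airtight. You assert that $\partial_\xi^\mu\bigl(1 + it/(2\xi)\bigr)^{\nu - 1/2} \ll_\nu P_\mu(t)\,\xi^{-2\mu}$ uniformly for $\xi \geq 1$, $t \geq 0$. By Fa\`a di Bruno, with $u = it/(2\xi)$ and $u^{(k)} \ll t\,\xi^{-k-1}$, a partition of $\{1,\dots,\mu\}$ into $m$ blocks contributes $\ll (1+u)^{\nu - 1/2 - m}\,t^m\,\xi^{-\mu - m}$, and the worst case $m=1$ gives only $\ll P(t)\,\xi^{-\mu - 1}$, not $\xi^{-2\mu}$ (already visible at $\mu = 2$: the term involving $u''$ yields $t\,\xi^{-3}$, which is not $\ll t^2\,\xi^{-4}$ for bounded $t$). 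Fortunately the Leibniz assembly only needs $I^{(j)}(\xi) \ll \xi^{-j}$ to conclude $W_\nu^{(\mu)} \ll \xi^{-\mu - 1/2}$ from the factor $\xi^{-1/2}$ outside, and the correct bound $\xi^{-\mu-1}$ comfortably supplies this; in fact the extremal Leibniz term is $k = \mu$, where all derivatives fall on $\xi^{-1/2}$ and none on the integral. So replace $\xi^{-2\mu}$ with $\xi^{-\mu-1}$ (or simply $\xi^{-\mu}$) and the proof goes through as written.
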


\begin{proof}
  We start with the integral representations
  \[ J_0(\xi) = \frac1\pi \int_0^\infty \! \sin\left( \frac x{2\pi} + \frac{\pi \xi^2}{2x} \right) \, \frac{dx}x \quad \text{and} \quad Y_0(\xi) = -\frac1\pi \int_0^\infty \! \cos\left( \frac x{2\pi} + \frac{\pi \xi^2}{2x} \right) \, \frac{dx}x, \]
  which can be found in \cite[3.871]{GradRyzh_TableIntSerProd}.
  Here we will only look at \( Y_\nu(\xi) \), as the proof for \( J_\nu(\xi) \) is almost identical.
  As in \cite[Lemma 4]{DeshIwa_AddDivProb}, we use a substitution
  \[ y = \frac{ \sqrt{x} }{2\pi} - \frac{\xi}{ 2 \sqrt{x} }, \quad x = \pi^2 \left( y + \sqrt{ y^2 + \frac\xi\pi } \right)^2, \]
  so that we can write the integral above as
  \[ Y_0(\xi) = -\frac2\pi \int_{-\infty}^\infty \! \cos\left( 2\pi \left( y^2 + \frac\xi{2\pi} \right) \right) \left( y^2 + \frac\xi\pi \right)^{-\frac12} \, dy. \]
  Now writing the cosine function out as a sum of exponential functions, we get \eqref{eqn: representation of Y_0} for \( Y_0 \) with
  \[ v_Y(\xi) = -\frac2\pi \int_0^\infty \! \frac{ \e(y^2) }{ \sqrt{ y^2 + \xi } } \, dy. \]
  The estimate \eqref{eqn: estimates for v_J and v_Y} can be shown by splitting the integral at \(1\) and repeatedly using partial integration on the part which goes to \(\infty\).
  The statements for \( Y_\nu(\xi) \) follow from \eqref{eqn: recurrence relations for Bessel functions}.
\end{proof}

\subsection{The Kuznetsov trace formula and the Large sieve inequalities}\label{subsection: The Kuznetsov trace formula and the Large sieve inequalities}

We follow in great parts the notation used in \cite{DeshIwan_KlSumsFourCoeffCuspForms}.
Let \(q\) be some positive integer, which will stay fixed throughout this section, and let \( \Gamma := \Gamma_0(q) \) be the Hecke congruence subgroup of level \(q\).
For these groups we have the spectral decomposition
\[ L^2(\Gamma \backslash \mathbb{H}) = \mathbb{C} \oplus L_\text{cusp}^2(\Gamma \backslash \mathbb{H}) \oplus L_\text{Eis}^2(\Gamma \backslash \mathbb{H}), \]
where \( L_\text{cusp}^2( \Gamma \backslash \mathbb{H} ) \) is the space spanned by the cusp forms, and \( L_\text{Eis}^2( \Gamma \backslash \mathbb{H} ) \) is a continuous sum spanned by Eisenstein series.

Let \( u_0  \) be the constant function, and let \( u_j \), \( j \geq 1 \), run over an orthonormal basis of \( L_\text{cusp}^2(\Gamma \backslash \mathbb{H}) \), with the corresponding real eigenvalues \( \lambda_0 < \lambda_1 \leq \lambda_2 \leq \ldots \).
We set \( {\kappa_j}^2 = \lambda_j - \frac14 \), where we choose the sign of \( \kappa_j \) so that \( i \kappa_j \geq 0 \) if \( \lambda_j < \frac14 \), and \( \kappa_j \geq 0 \) if \( \lambda_j \geq \frac14 \).
Then the Fourier expansions of these functions is given by
\[ u_j(z) = y^\frac12 \sum_{m \neq 0} \rho_j(m) K_{i \kappa_j}(2 \pi |m| y) e(mx). \]
The Selberg eigenvalue conjecture says that \( \lambda_1 \geq \frac14 \), which would imply that all \( \kappa_j \) are real and non-negative, however this still remains to be proven.
The eigenvalues with \( 0 < \lambda_j < \frac14 \) as well as the corresponding values \( \kappa_j \) are called exceptional, and lower bounds for these exceptional \( \lambda_j \) imply upper bounds for the corresponding \( i \kappa_j \).
Let \( \theta \in \mathbb{R}_0^+ \) be such that \( i \kappa_j \leq \theta \) for all exceptional \( \kappa_j \) uniformly for all levels \(q\); by the work of Kim and Sarnak \cite{Kim_FunctExtSqGL4SymFouGL2} we know that we can choose
\begin{align}\label{eqn: Kim-Sarnak-bound}
  \theta = \frac7{64}.
\end{align}

For any cusp \( \mathfrak{c} \) of \( \Gamma \) we have the Eisenstein series, defined for \( \Re s > 1 \) and \( z \in \mathbb{H} \) by
\[ E_\mathfrak{c}(z; s) = \sum_{ \tau \in \Gamma_\mathfrak{c} \backslash \Gamma } \Im( \sigma_\mathfrak{c}^{-1} \tau z )^s, \]
which can be continued meromorphically to the whole complex plane.
The space \( L_\text{Eis}^2( \Gamma \backslash \mathbb{H} ) \) is then the continuous direct sum spanned by the \( E_\mathfrak{c}(z; \frac12 + ir) \), \( r \in \mathbb{R} \), and the Fourier expansion of these Eisenstein series around \( \infty \) is given by
\begin{align*}
  E_\mathfrak{c}(z; s) = \delta_{ \mathfrak{c} \infty } y^s &+ \pi^\frac12 \frac{ \Gamma\left( s - \frac12 \right) }{ \Gamma(s) } \varphi_{ \mathfrak{c}, 0 }(s) y^{1 - s} \\
    &+ 2 y^\frac12 \frac{ \pi^s }{ \Gamma(s) } \sum_{n \neq 0} |n|^{s - \frac12} \varphi_{ \mathfrak{c}, n }(s) K_{s - \frac12}(2 \pi |n| y) e(nx).
\end{align*}

Finally, denote by \( \mathfrak{M}_k(\Gamma) \) the space of holomorphic cusp forms of weight \(k\) and by \( \theta_k(q) \) its dimension.
Let \( f_{j, k} \), \( 1 \leq j \leq \theta_k(q) \), be an orthonormal basis of \( \mathfrak{M}_k(\Gamma) \).
Then the Fourier expansion of \( f_{j, k} \) around \(\infty\) is given by
\[ f_{j, k}(z) = \sum_{m = 1}^\infty \psi_{j, k}(m) e(mz). \]\

With the whole notation set up, we can now formulate the famous Kuznetsov trace formula (see \cite[Theorem 1]{DeshIwan_KlSumsFourCoeffCuspForms}).

\begin{thm}\label{thm: Kuznetsov trace formula}
  Let \( f : (0, \infty) \rightarrow \mathbb{C} \) be smooth with compact support.
  Let \(m\), \(n\) be two positive integers.
  Then
  \begin{align*}
    \sum_{ c \equiv 0 \smod q } \frac{ S(m, n; c) }c &f\left( 4\pi \frac{ \sqrt{mn} }c \right) = \sum_{j = 1}^\infty \frac{ \overline{ \rho_j }(m) \rho_j(n) }{ \cosh(\pi \kappa_j) } \hat f( \kappa_j ) \\
      &+ \frac1\pi \sum_\mathfrak{c} \int_{-\infty}^\infty \! \left( \frac mn \right)^{-ir} \overline{ \varphi_{ \mathfrak{c}, m } } \left( \frac12 + ir \right) \varphi_{\mathfrak{c}, n} \left( \frac12 + ir \right) \hat f(r) \, dr \\
      &+ \frac1{2 \pi} \sum_\twoln{ k \equiv 0 \smod 2 }{ 1 \leq j \leq \theta_k(q) } \frac{ i^k (k - 1)! }{ \left( 4\pi \sqrt{mn} \right)^{k - 1} } \overline{ \psi_{j, k} }(m) \psi_{j, k}(n) \tilde f(k - 1),
  \intertext{and}
    \sum_{ c \equiv 0 \smod q } \frac{ S(m, -n; c) }c &f\left( 4\pi \frac{ \sqrt{mn} }c \right) = \sum_{j = 1}^\infty \frac{ \rho_j(m) \rho_j(n) }{ \cosh(\pi \kappa_j) } \check f( \kappa_j ) \\
      &+ \frac1\pi \sum_\mathfrak{c} \int_{-\infty}^\infty \! (mn)^{ir} \varphi_{ \mathfrak{c}, m } \left( \frac12 + ir \right) \varphi_{\mathfrak{c}, n} \left( \frac12 + ir \right) \check f(r) \, dr,
  \end{align*}
  where the Bessel transforms are defined by
  \begin{align*}
    \hat f(r) &= \frac\pi{ \sinh(\pi r) } \int_0^\infty \! \frac{ J_{2ir}(\xi) - J_{-2ir}(\xi) }{2i} f(\xi) \, \frac{d\xi}\xi, \\
    \check f(r) &= \frac4\pi \cosh(\pi r) \int_0^\infty \! K_{2ir}(\xi) f(\xi) \, \frac{d\xi}\xi, \\
    \tilde f(\ell) &= \int_0^\infty \! J_\ell(\xi) f(\xi) \, \frac{d\xi}\xi.
  \end{align*}
\end{thm}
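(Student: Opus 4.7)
The plan is to follow the classical Poincaré-series approach of Kuznetsov, refined by Bruggeman and by Deshouillers--Iwaniec \cite{DeshIwan_KlSumsFourCoeffCuspForms}. For each positive integer $m$ and each complex $s$ with $\Re s$ sufficiently large, introduce the Poincaré series
\[ P_m(z, s) := \sum_{\gamma \in \Gamma_\infty \backslash \Gamma} \Im(\gamma z)^s e(m \gamma z), \]
together with its weight-$k$ holomorphic analogue $P_m^{(k)}(z)$, needed for the $\tilde f$-term. A direct Bruhat-style computation shows that the $n$-th Fourier coefficient of $P_m(z, s)$ at the cusp $\infty$ equals a main term $\delta_{m, n} y^s$ plus a sum over $c \equiv 0 \smod q$ of $S(m, n; c)/c$ times an explicit Bessel-type kernel $K_s(m, n; c)$.

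Next I would compute the Petersson inner product $\langle P_m( \cdot, s ), P_n( \cdot, s' ) \rangle_{\Gamma \backslash \mathbb{H}}$ in two ways. On the one hand, unfolding one of the two series against the fundamental domain and using the Fourier expansion just described produces the ``geometric side''
\[ \sum_{c \equiv 0 \smod q} \frac{ S(m, n; c) }{c} \, \mathcal{K}\left( 4 \pi \frac{ \sqrt{mn} }{c}; s, s' \right), \]
where $\mathcal{K}$ is explicit. On the other hand, Parseval's identity with respect to the spectral decomposition $L^2( \Gamma \backslash \mathbb{H} ) = \mathbb{C} \oplus L^2_{\mathrm{cusp}} \oplus L^2_{\mathrm{Eis}}$ yields a ``spectral side'': the contribution of each $u_j$ is $\overline{ \rho_j(m) } \rho_j(n) / \cosh(\pi \kappa_j)$ multiplied by a pair of Gamma factors arising from the Mellin--Barnes integral of a $K$-Bessel function against $y^s$, and the Eisenstein contribution is obtained in the same way after inserting the Fourier expansion of $E_\mathfrak{c}(z; \tfrac12 + ir)$ and integrating over $r$. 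The weight-$k$ argument using $P_m^{(k)}$ and Petersson's formula produces the holomorphic term.

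The passage from these two identities (valid only for the very special family of kernels $\mathcal{K}$ arising from Poincaré pairings) to an arbitrary smooth, compactly supported $f$ is done by Bessel inversion. More precisely, the map $s \mapsto \mathcal{K}(\cdot; s, s')$ generates, via superposition over a vertical line $\Re s = \sigma$, a dense subspace of a suitable Bessel transform image. Inverting this correspondence via the Kontorovich--Lebedev formula on the $K$-Bessel side (for the $\check f$-identity, obtained from the $P_{-m}$ version of the above) and via the corresponding Sears--Titchmarsh-type formula for $J_{2ir} - J_{-2ir}$ on the $\hat f$-side extends the identity to all $f \in C_c^\infty( (0, \infty) )$ by linearity and continuity, producing precisely the transforms $\hat f$, $\check f$, $\tilde f$ defined in the statement.

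The principal technical obstacle is this inversion and the analytic bookkeeping around it. One has to (i) prove absolute convergence of the $c$-sum uniformly in the relevant parameter range, using Weil's bound, so that unfolding and interchange of summation and integration is legitimate; (ii) shift contours past the poles of the Gamma factors on the spectral side, picking up the discrete eigenvalues and the critical-line integral, which forces a careful treatment of the exceptional spectrum $| \Im \kappa_j | \leq \theta$; and (iii) verify that the test functions $\hat f, \check f, \tilde f$ decay fast enough in $r$ (respectively in $k$) to guarantee absolute convergence of the spectral side against the large-sieve-type bounds on the Fourier coefficients $\rho_j(n)$, $\varphi_{\mathfrak{c}, n}$, $\psi_{j, k}(n)$. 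Once these estimates are established the two sides match term by term and the two formulas in Theorem \ref{thm: Kuznetsov trace formula} follow.
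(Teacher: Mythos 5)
The paper does not prove this theorem: it is stated as a citation to Deshouillers--Iwaniec \cite[Theorem 1]{DeshIwan_KlSumsFourCoeffCuspForms}, with no argument supplied. Your proposal is therefore not being measured against a proof internal to the paper, but rather against that reference.

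That said, your sketch does capture the standard Kuznetsov/Deshouillers--Iwaniec strategy that the cited source uses: build Poincar\'e series $P_m(z,s)$, extract Kloosterman sums from their Fourier coefficients, compute a Petersson pairing by unfolding on the geometric side and by spectral decomposition on the other, and then deduce the statement for general $f\in C_c^\infty(0,\infty)$ via Bessel inversion (Kontorovich--Lebedev for the $K$-Bessel/$\check f$ branch, Sears--Titchmarsh for the $J$-Bessel/$\hat f$ branch). Two small points worth keeping in mind if you were to execute this in full. First, the holomorphic contribution $\tilde f(k-1)$ does not really require a separate argument with holomorphic Poincar\'e series and Petersson's formula; in the Deshouillers--Iwaniec treatment it emerges naturally from the residues of the Gamma factors when the contour of the inverse Bessel transform is shifted, so citing both mechanisms as you do is a slight redundancy rather than an error. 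Second, in the opposite-sign identity there is no holomorphic term at all (the $K$-Bessel kernel has no discrete spectrum in its inversion theory), which your sketch is consistent with but does not make explicit. With those caveats, your proposal is a correct high-level account of the proof in the cited reference.
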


To get some first estimates for the appearing Bessel transforms we refer to \cite[Lemma 2.1]{BloHarMich_BurgSubconvBoundTwiLFunc}:

\begin{lemma}\label{lemma: estimates for the Bessel transforms}
  Let \( f : (0, \infty) \rightarrow \mathbb{C} \) be a smooth and compactly supported function such that
  \[ \supp f \asymp X \quad \text{and} \quad f^{ (\nu) } \ll \frac1{Y^\nu} \quad \text{for} \quad \nu = 0, 1, 2, \]
  for positive \(X\) and \(Y\) with \( X \gg Y \).
  Then
  \begin{align}
    \hat f(ir), \check f(ir) &\ll \frac{ 1 + Y^{-2r} }{1 + Y} & \text{for} \quad 0 \leq r < \frac14, \label{eqn: first estimate for the bessel transforms} \\
    \hat f(r), \check f(r), \tilde f(r) &\ll \frac{ 1 + |\log Y| }{1 + Y} & \text{for} \quad r \geq 0, \\
    \hat f(r), \check f(r), \tilde f(r) &\ll \left( \frac XY \right)^2 \left( \frac1{ r^\frac52 } + \frac X{ r^3 } \right) & \text{for} \quad r \gg \max(X, 1).
  \end{align}
\end{lemma}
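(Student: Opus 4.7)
Since this lemma is quoted from Blomer--Harcos--Michel \cite{BloHarMich_BurgSubconvBoundTwiLFunc}, the cleanest option is simply to appeal to their proof. If one were to reprove it, the strategy would be to substitute integral or series representations of the Bessel kernels $J_{\pm 2ir}$, $K_{2ir}$ and $J_\ell$ into the definitions of $\hat f$, $\check f$ and $\tilde f$, and then to treat the three ranges of $r$ separately, using the basic estimates for Bessel functions and their derivatives recorded before Lemma \ref{lemma: Bessel functions as oscillating functions}.

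For the first two bounds (small or bounded $r$) I would insert the classical power-series expansion of $J_{\pm 2ir}(\xi)$ and $K_{2ir}(\xi)$ near the origin together with the $\xi^{-1/2}$ decay for $\xi \gtrsim 1$, and then estimate directly against $f(\xi)\, d\xi/\xi$ on the support $\xi \asymp X$. The leading $\xi^{-2r}$ behaviour of $K_{2ir}$ produces the $Y^{-2r}$ factor in \eqref{eqn: first estimate for the bessel transforms}, the logarithmic singularities of $K_0$ and $Y_0$ at the origin give the $|\log Y|$ term in the middle bound, and the factor $1/(1+Y)$ tracks the measure of the support divided by the characteristic scale. The apparent pole of $\pi/\sinh(\pi r)$ at $r = 0$ in $\hat f$ is compensated by the simultaneous vanishing of $J_{2ir} - J_{-2ir}$, which one exploits by a Taylor expansion in $r$ around zero.

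The hardest part is the third bound, for $r \gg \max(X, 1)$, where one must extract genuine power decay in $r$. Here I would use an oscillatory representation such as $K_{2ir}(\xi) = \int_0^\infty e^{-\xi \cosh t} \cos(2 r t)\,dt$ and its counterpart for $J_{2ir}$, interchange the order of integration, and analyse the resulting oscillatory integral in $t$ by stationary phase near $t = 0$. Two successive integrations by parts in the $\xi$-variable, each moving a derivative onto $f$ at the cost of $X/Y$, combined with the stationary-phase gain $r^{-1/2}$, produce the main contribution of size $(X/Y)^2 r^{-5/2}$, while a further integration by parts yields a boundary term accounting for the subsidiary $(X/Y)^2 X r^{-3}$. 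The main technical obstacle lies precisely in this bookkeeping between the loss of $X/Y$ per integration by parts and the gain of $r^{-1}$, and in verifying that no larger contribution hides in the saddle-point analysis near $t = 0$; once that is under control, the three bounds follow by combining the estimates in each regime.
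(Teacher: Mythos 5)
The paper does not supply its own proof of this lemma: it is quoted verbatim from Blomer--Harcos--Michel (\cite{BloHarMich_BurgSubconvBoundTwiLFunc}, Lemma~2.1) and used as a black box. Your first instinct --- to appeal directly to their proof --- is therefore exactly what the paper does, and there is no internal proof to compare against.

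Your sketch of the BHM argument is reasonable in structure (series expansions of the Bessel kernels near the origin for the small- and bounded-$r$ bounds, oscillatory integral representations together with stationary phase for the large-$r$ bound). One detail is off, though: the secondary term $(X/Y)^2 X r^{-3}$ cannot arise as a \emph{boundary term} of an integration by parts. Since $f$ is smooth and compactly supported inside $(0,\infty)$, every boundary contribution in integrations by parts vanishes identically. That term comes instead from the error in the stationary-phase approximation (equivalently, from the part of the oscillatory kernel away from the critical point), with the extra factor $X$ tracking the length of the support of $f$ over which that error is accumulated. With that correction, your outline is consistent with the standard approach.
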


For oscillating functions, we can do better.
Assume \( w : (0, \infty) \rightarrow \mathbb{C} \) to be a smooth and compactly supported function such that
\[ \supp w \asymp X \quad \text{and} \quad w^{ (\nu) } \ll \frac1{X^\nu} \quad \text{for} \quad \nu \geq 0, \]
and for \( \alpha > 0 \) define
\[ f(\xi) := \e\left( \xi \frac\alpha{2 \pi} \right) w(\xi). \]
Then the following two lemmas give bounds for the Bessel transforms of \( f(\xi) \), depending on the sizes of \(X\) and \(\alpha\).

\begin{lemma}\label{lemma: estimates for the Bessel transforms of oscillating functions for large alpha}
  Assume that
  \[ X \ll 1 \quad \text{and} \quad \alpha X \gg 1. \]
  Then for \( \nu, \mu \geq 0 \),
  \begin{align}
    \hat f(ir), \check f(ir) &\ll X^{ -2r + \varepsilon } \left( X^\mu + \frac1{ (\alpha X)^\nu } \right) &\text{for} \quad 0 < r \leq \frac14, \label{eqn: first estimate for the bessel transforms for osciallting functions} \\
    \hat f(r), \check f(r), \tilde f(r) &\ll \frac{\alpha^\varepsilon}{\alpha X} \left( \frac{\alpha X}r \right)^\nu &\text{for} \quad r > 0. \label{eqn: second estimate for the bessel transforms for osciallting functions}
  \end{align}
\end{lemma}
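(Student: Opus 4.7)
The plan is to establish the two estimates separately, each by combining a bound from the small-argument expansion of the Bessel kernel with a bound from integration by parts against the oscillating factor $e(\alpha\xi/(2\pi))$ in $f$. Because both target inequalities contain a sum on the right, it suffices to produce each summand individually and add.

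For the first estimate I would start with $\check f(ir)$, whose kernel is $K_{2r}(\xi)$. On the support $\xi\asymp X \ll 1$ the Frobenius expansion
\[ K_{2r}(\xi) = \frac{\pi}{2\sin(2\pi r)}\sum_{k\geq 0} \frac{1}{k!}\left[\frac{(\xi/2)^{-2r+2k}}{\Gamma(k+1-2r)} - \frac{(\xi/2)^{2r+2k}}{\Gamma(k+1+2r)}\right] \]
is truncated at a large fixed order $K$, and the tail is bounded trivially as $O(\xi^{2K-2r})$, contributing $\ll X^{-2r+\varepsilon}\cdot X^{2K}$. Each surviving term yields an integral of the shape $\int \xi^{\pm 2r+2k-1}e(\alpha\xi/(2\pi))w(\xi)\,d\xi$, for which $N$-fold integration by parts against the exponential — each pass trading an $\alpha^{-1}$ gain against at most an $X^{-1}$ loss from differentiating the smooth weight — yields $\ll X^{\pm 2r+2k}(\alpha X)^{-N}$. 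Summing over $k$ and choosing $K,N$ large enough (then interpolating to handle non-integer $\mu,\nu$) gives $\check f(ir)\ll X^{-2r+\varepsilon}(X^{\mu}+(\alpha X)^{-\nu})$. The case of $\hat f(ir)$ is identical, using the analogous expansion of $(J_{-2r}(\xi)-J_{2r}(\xi))/(2i)$ in place of $K_{2r}(\xi)$.

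For the second estimate, on the support $\xi\asymp X\ll 1$ the Bessel kernels appearing in the three transforms (together with their prefactors $\pi/\sinh(\pi r)$, $4\cosh(\pi r)/\pi$, or $1$) are of size $O(X^\varepsilon)$ uniformly in $r>0$, so a single integration by parts against $e(\alpha\xi/(2\pi))$ already yields the base bound $\alpha^\varepsilon/(\alpha X)$. To extract the additional factor $(\alpha X/r)^\nu$ — which is only an improvement when $r>\alpha X$ — I would exploit the oscillation of the Bessel kernel itself, whose phase varies as $r\log\xi$ for $J_{\pm 2ir}$, via $\nu$ applications of the recurrence relations \eqref{eqn: recurrence relations for Bessel functions}. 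This trades derivatives of the Bessel kernel for a factor $r^{-\nu}$; the derivatives then land on $f$, where they pick up at most $\alpha^\nu X^{-\nu}$, producing the claimed save after the final integration by parts against the exponential.

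The main obstacle is to control the uniformity in $r$ as $r\to 0$ in the first estimate: the prefactor $1/\sin(2\pi r)$ and the Gamma factor $\Gamma(1-2r)$ each blow up, but the leading terms of the two Bessel series combine to reproduce the logarithmic behaviour $K_0(\xi)\sim -\log(\xi/2)-\gamma$, with the analogous cancellation in the $J$-case. Tracking this cancellation so that the residual loss is at most $|\log X|^{O(1)}$ — and therefore absorbable into the $X^\varepsilon$ — is the most delicate part of the argument.
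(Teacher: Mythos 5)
Your proposal for the first estimate \eqref{eqn: first estimate for the bessel transforms for osciallting functions} is essentially the paper's argument: expand the kernel in its power series about $\xi=0$, truncate, and integrate by parts against $e(\alpha\xi/(2\pi))$ term by term. The only difference is cosmetic (you invoke the Frobenius expansion with the $1/\sin(2\pi r)$ prefactor, the paper groups the two series into a single function $g(\xi,r,m)$ and bounds its derivatives directly), and your concern about the $r\to 0$ cancellation is exactly the kind of bookkeeping that needs to be done; this part is fine.

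The sketch of the second estimate \eqref{eqn: second estimate for the bessel transforms for osciallting functions} has a genuine gap, and it lies precisely at the step you present as routine. After a single integration by parts against $e(\alpha\xi/(2\pi))$, the derivative falls not only on $w(\xi)/\xi$ but also on the Bessel kernel, and the kernel's derivative grows with $r$: writing $\frac{d}{d\xi}J_{2ir}(\xi)=J_{2ir-1}(\xi)-\frac{2ir}{\xi}J_{2ir}(\xi)$, the second term contributes a factor $\asymp r/X$, and even after the prefactor $\pi/\sinh(\pi r)$ the net size of $\frac{d}{d\xi}\bigl[\frac{\pi}{\sinh(\pi r)}\frac{J_{2ir}(\xi)-J_{-2ir}(\xi)}{2i}\bigr]$ is $\asymp\sqrt{r}/X$ for $r\gg 1$. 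So a single integration by parts gives $\ll\sqrt{r}\,\alpha^\varepsilon/(\alpha X)$, not $\alpha^\varepsilon/(\alpha X)$, and the trivial bound $\ll r^{-1/2}$ does not repair this: the two bounds meet at $r\asymp\alpha X$, where both are $\asymp(\alpha X)^{-1/2}$, which is far larger than $(\alpha X)^{-1}$. The subsequent appeal to the recurrence relations \eqref{eqn: recurrence relations for Bessel functions} to extract $(\alpha X/r)^\nu$ is also unsubstantiated: those recurrences shift the order $2ir\mapsto 2ir\pm 1$ and do not by themselves produce a saving of $1/r$. The paper avoids both problems at once by substituting the Mehler--Sonine type representation \eqref{eqn: identity for J-Bessel function}, which rewrites the kernel as $\int\cos(\eta\cosh\zeta)\cos(2r\zeta)\,d\zeta$; this disentangles the two oscillations, so that integration by parts in $\eta$ (against the combined phase $\alpha\pm\cosh\zeta$, with a careful smooth localization near the stationary set $\cosh\zeta=\alpha$) gives the base factor $\alpha^\varepsilon/(\alpha X)$ with no spurious $r$, and repeated integration by parts in $\zeta$ against $\cos(2r\zeta)$ then cleanly produces the $(\alpha X/r)^\nu$ saving. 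That structural step is the missing idea in your sketch.
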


\begin{proof}
  We begin with \eqref{eqn: first estimate for the bessel transforms for osciallting functions}.
  Using the Taylor series of the \( J_\nu \)-Bessel function we can write the Bessel transform \( \hat f(ir) \) as
  \begin{align}
    \hat f(ir) = \frac\pi2 \sum_{m = 0}^\infty \frac{ (-1)^m }{ 4^m m! } \int_0^\infty \! e\left( \xi \frac\alpha{2 \pi} \right) g(\xi, r, m) w(\xi) \xi^{2m - 1} \, d\xi \label{eqn: Taylor series for hat f}
  \end{align}
  with
  \[ g(\xi, r, m) := \frac1{ \sin(\pi r) } \left( \frac1{ \Gamma(m + 2r + 1) } \left( \frac\xi2 \right)^{2r} - \frac1{ \Gamma(m - 2r + 1) } \left( \frac\xi2 \right)^{-2r} \right). \]
  For \( 0 < r \leq \frac12 \), one can check that we have the bound
  \[ \frac{ \partial^\nu g }{ \partial \xi^\nu } (\xi, r, m) \ll \frac{ X^{-2r + \nu + \varepsilon} }{ (m - 1)! }. \]
  By splitting the sum in \eqref{eqn: Taylor series for hat f} at \( m = \mu \), and using partial integration for the finite part while estimating trivially the rest, we get that
  \[ \hat f(ir) \ll X^{-2r + \varepsilon} \left( X^{2\mu} + \frac1{ (\alpha X)^\nu } \right). \]
  The estimate for \( \check f(ir) \) follows in exactly the same way by using the corresponding Taylor series for \( K_{2ir}(\xi) \).
  
  For the proof of \eqref{eqn: second estimate for the bessel transforms for osciallting functions} we follow \cite[Lemma 3]{Jut_ConvFourCoeffCuspForms}.
  We begin with the following identity (see \cite[8.411.11]{GradRyzh_TableIntSerProd}),
  \begin{align} \label{eqn: identity for J-Bessel function}
    \frac{ J_{2ir}(\eta) - J_{-2ir}(\eta) }{ \sinh(\pi r) } = \frac2{\pi i} \int_{-\infty}^\infty \! \cos( \eta \cosh\zeta ) \cos(2c \zeta) \, d\zeta,
  \end{align}
  which gives
  \[ \hat f(r) = - \int_{-\infty}^\infty \int \! \cos(\eta \cosh\zeta) \cos(2r \zeta) f(\eta) \, \frac{d\eta}\eta d\zeta =: -(I^+ + I^-) \]
  with
  \[ I^\pm := \int_{-\infty}^\infty \int \! \e\left( \eta \left( \frac{ \alpha \pm \cosh \zeta }{2\pi} \right) \right) \frac{ w(\eta) }\eta \cos(2r \zeta) \, d\eta d\zeta. \]
  To bound \( I^+ \) we use partial integration \(\mu\)-times on the integral over \(\eta\) and get
  \[ I^+ \ll \frac{ \alpha^\varepsilon }{ (\alpha X)^\mu }. \]
  
  The treatment of \( I^- \) is a little trickier since the factor
  \[ \gamma(\zeta) := \alpha - \cosh \zeta \]
  occuring in the exponent may vanish, so that we have to treat the integral differently depending on whether \( \gamma(\zeta) \) is near \(0\) or not.
  Out of technical reasons, it is easier to use smooth weight functions to split the integral.
  Set
  \[ Z_1 := \arcosh(\alpha - A), \quad Z_2 := \arcosh(\alpha + A), \quad \text{with} \quad A := \frac1X. \]
  Let \( u_i : \mathbb{R} \rightarrow [0, \infty) \), \( i = 1, 2, 3 \), be suitable weight functions such that
  \begin{align*}
    u_1(\xi) &= 1 \quad \text{for} \quad | \xi | \leq \frac12 Z_1 \quad \text{and} \quad \supp u_1 \subseteq [ -Z_1, Z_1 ], \\
    u_2(\xi) &= 1 \quad \text{for} \quad | \xi | \geq 2 Z_2 \quad \text{and} \quad \supp u_2 \subseteq [ -\infty, -Z_2 ] \cup [ Z_2, \infty ],
  \end{align*}
  and define
  \[ u_3(\xi) := 1 - u_1(\xi) - u_2(\xi). \]
  Note that for all \( i = 1, 2, 3 \),
  \[ u_i^{ (\nu) }(\xi) \ll 1 \quad \text{for} \quad \nu \geq 0. \]
  Then we have to consider the integrals
  \begin{align} \label{eqn: splitted I^-}
    I_i^- := \int \int \! u_i(\zeta) \e\left( \eta \frac{ \gamma(\zeta) }{2\pi} \right) \frac{ w(\eta) }\eta \cos(2r \zeta) \, d\eta d\zeta,
  \end{align}
  and using partial integration \(\mu\)-times over \(\eta\) we get
  \[ I_1^-, I_2^- \ll \frac A{ \alpha (XA)^\mu } + \frac{ \alpha^\varepsilon }{ (\alpha X)^\mu }, \]
  whereas bounding \( I_3^- \) directly gives
  \[ I_3^- \ll \frac A\alpha. \]
  This already proves \eqref{eqn: second estimate for the bessel transforms for osciallting functions} for \( \nu = 0 \).
  The result for \( \nu \geq 1 \) can be shown the same way by partially integrating \(\nu\)-times over \(\zeta\) before estimating the integrals absolutely.
  
  The estimate for \( \check f(r) \) can be shown analogously by using the integral representation
  \[ K_{2ir}(\eta) = \frac1{ \cosh(\pi r) } \int_0^\infty \! \cos(\eta \sinh\zeta) \cos(2r \zeta) \, d\zeta \]
  (see \cite[8.432.4]{GradRyzh_TableIntSerProd}).
  Finally, the proof for \( \tilde f(r) \) also goes along the same lines -- in this case we use the identity
  \[ J_\ell(\eta) = \frac1\pi \int_0^\pi \! \cos( \ell \zeta - \eta \sin\zeta ) \, d\zeta, \]
  which can be found for instance in \cite[8.411.1]{GradRyzh_TableIntSerProd}.
\end{proof}

\begin{lemma}\label{lemma: estimates for the Bessel transforms of oscillating functions for alpha near 1}
  Assume that
  \[ X \gg 1 \quad \text{and} \quad | \alpha - 1 | \ll \frac{ X^\varepsilon }X. \]
  Then for \( \nu \geq 0 \),
  \begin{align}
    \hat f(ir), \check f(ir) &\ll 1 &\text{for} \quad 0 < r \leq \frac14, \label{eqn: first estimate for alpha near 1} \\
    \hat f(r), \tilde f(r) &\ll \frac{X^\varepsilon}{ X^\frac12 } \left( \frac{ X^\frac12 }r \right)^\nu &\text{for} \quad r > 0, \label{eqn: second estimate for alpha near 1} \\
    \check f(r) &\ll \frac{X^\varepsilon}X \left( \frac Xr \right)^\nu &\text{for} \quad r > 0.
  \end{align}
\end{lemma}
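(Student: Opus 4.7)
The approach mimics the proof of Lemma \ref{lemma: estimates for the Bessel transforms of oscillating functions for large alpha}. Bound \eqref{eqn: first estimate for alpha near 1} is immediate from Lemma \ref{lemma: estimates for the Bessel transforms} applied with $Y = 1$, since $f^{(\nu)}(\xi) \ll \alpha^\nu \ll 1$ uniformly. For the remaining two bounds I use the integral representations of $J_{2ir}$, $K_{2ir}$, and $J_\ell$ already employed in the proof of the previous lemma, smoothly split the outer $\zeta$-integral into a ``near-stationary'' piece and its complement, perform repeated integration by parts in $\xi$ on the complement, and (for $\nu \geq 1$) additionally integrate by parts $\nu$ times in $\zeta$ against the $\cos(2r\zeta)$ factor.

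For the bound on $\hat f(r)$, identity \eqref{eqn: identity for J-Bessel function} gives $\hat f(r) = -\tfrac12(I^+ + I^-)$ with
\[ I^\pm = \int_{-\infty}^\infty \int_0^\infty \e\left( \xi \frac{\alpha \pm \cosh\zeta}{2\pi} \right) \frac{w(\xi)}{\xi} \cos(2r\zeta) \, d\xi \, d\zeta. \]
In $I^+$ the $\xi$-derivative of the phase is $\asymp 1 + \cosh\zeta$ and never vanishes, so repeated integration by parts in $\xi$ yields $O(X^{-N})$ for any $N$. In $I^-$ the phase $\alpha - \cosh\zeta$ vanishes quadratically at $\zeta = 0$; splitting smoothly at $|\zeta| \asymp X^{-\frac12 + \varepsilon}$, the near-stationary region contributes $\ll X^{-\frac12 + \varepsilon}$ by the trivial bound on the $\xi$-integral, while on the complement $|\alpha - \cosh\zeta|$ is bounded below by $\gg \zeta^2$ (resp.\ $\gg \cosh\zeta$), so integration by parts in $\xi$ again gives arbitrary decay in $X$. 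For $\nu \geq 1$, additionally integrating by parts $\nu$ times in $\zeta$ on the near-stationary piece costs a factor $\xi \sinh\zeta \lesssim X \cdot X^{-\frac12 + \varepsilon} = X^{\frac12 + \varepsilon}$ per derivative, since the dominant term in $F^{(\nu)}(\zeta)$, where $F(\zeta) := \int \e(\xi(\alpha - \cosh\zeta)/(2\pi)) w(\xi)/\xi \, d\xi$, comes from every derivative landing on the exponential. This produces the claimed factor $(X^{\frac12}/r)^\nu$. The bound on $\tilde f(r)$ follows identically from $J_\ell(\eta) = \pi^{-1} \int_0^\pi \cos(\ell \zeta - \eta \sin\zeta) \, d\zeta$, because the vanishing of $\alpha - \sin\zeta$ at $\zeta = \pi/2$ is again quadratic.

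For the bound on $\check f(r)$, use the representation $K_{2ir}(\eta) = \cosh(\pi r)^{-1} \int_0^\infty \cos(\eta \sinh\zeta) \cos(2r\zeta) \, d\zeta$ to write $\check f(r) = \frac{2}{\pi}(J^+ + J^-)$ analogously. In $J^-$ the phase $\alpha - \sinh\zeta$ vanishes linearly at the unique $\zeta_0 > 0$ with $\sinh\zeta_0 = \alpha$, since $\cosh\zeta_0 = \sqrt{1 + \alpha^2} \asymp 1$ is bounded away from $0$; hence the near-stationary region has width $\asymp X^{-1 + \varepsilon}$, giving the $X^{-1 + \varepsilon}$ bound, and each integration by parts in $\zeta$ on this region costs $\xi \cosh\zeta_0 \asymp X$, yielding $(X/r)^\nu$. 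The main technical subtlety, and the only place where one can easily lose the correct exponent, is this geometric dichotomy: the $\cosh$-phase in $I^-$ has a soft (quadratic) zero at $\zeta = 0$, while the $\sinh$-phase in $J^-$ has a hard (linear) zero in the interior of the integration range, and this single difference is responsible for the distinct scales $X^{\frac12}$ and $X$ appearing in the two bounds.
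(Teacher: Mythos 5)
Your proposal is correct and follows the same route as the paper: the first bound via Lemma \ref{lemma: estimates for the Bessel transforms} with $Y=1$, and the others via the integral representations used in Lemma \ref{lemma: estimates for the Bessel transforms of oscillating functions for large alpha}, a smooth split of the $\zeta$-integral at the scale where the $\xi$-phase stops being small, repeated integration by parts in $\xi$ off the near-stationary piece, and $\nu$-fold integration by parts in $\zeta$ against $\cos(2r\zeta)$. Your explicit identification of the quadratic zero of $\alpha-\cosh\zeta$ at $\zeta=0$ (width $\asymp X^{-1/2+\varepsilon}$) versus the linear zero of $\alpha-\sinh\zeta$ at $\zeta_0\asymp 1$ (width $\asymp X^{-1+\varepsilon}$) cleanly explains the different powers of $X$ in the three bounds, a point the paper leaves implicit when it says the remaining cases "can be deduced similarly."
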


\begin{proof}
  The first bound \eqref{eqn: first estimate for alpha near 1} follows directly from \eqref{eqn: first estimate for the bessel transforms}.
  The proof of the other bounds follows the same path as in Lemma \ref{lemma: estimates for the Bessel transforms of oscillating functions for large alpha}, so we only want to point out some differences.
  In the case of \( \hat f(r) \), we again use the identity \eqref{eqn: identity for J-Bessel function}.
  For \( I^+ \) we here get the bound
  \[ I^+ \ll \frac1{ X^\mu }. \]
  It is again necessary to split \( I^- \), and in order to do so, we choose a suitable weight function \( u_1(\xi) \) which satisfies
  \[ u_1(\xi) = 1 \quad \text{for} \quad |\xi| \geq 2Z, \quad u_1(\xi) = 0 \quad \text{for} \quad |\xi| \leq Z, \]
  and
  \[ u_1^{ (\nu) }(\xi) \ll \frac1{ Z^\nu } \asymp \frac1{ A^\frac\nu2 }, \]
  where
  \[ A := \frac{ X^\varepsilon }X \quad \text{and} \quad Z := \arcosh(2A + \alpha). \]
  Set \( u_2(\xi) := 1 - u_1(\xi) \).
  Then
  \[ I^- =: I_1^- + I_2^- \]
  in the same way as in \eqref{eqn: splitted I^-}, and we get
  \[ I_1^- \ll \frac{ A^\frac12 }{ (XA)^\mu } + \frac1{ X^\mu } \ll \frac{ X^\varepsilon }{ X^\frac12 } \quad \text{and} \quad I_2^- \ll A^\frac12 \ll \frac{ X^\varepsilon }{ X^\frac12 }. \]
  This gives \eqref{eqn: second estimate for alpha near 1} for \( \nu = 0 \).
  By partially integrating over \(\zeta\), we get the result for higher \(\nu\).
  Finally, the results for \( \tilde f(r) \) and \( \check f(r) \) can be deduced similarly by using the appropriate integral representations for the occuring Bessel functions.
\end{proof}

Another important tool are the large sieve inequalities for Fourier coefficients of cusp forms and Eisenstein series (see \cite[Theorem 2]{DeshIwan_KlSumsFourCoeffCuspForms}).
For a sequence \( a_n \) of complex numbers define
\[ \| a_n \|_N := \sqrt{ \sum_{N < n \leq 2N} |a_n|^2 }, \]
and furthermore set
\begin{align*}
  \Sigma_j^{(1)} (N) &:= \frac1{ \sqrt{ \cosh(\pi \kappa_j) } } \sum_{N < n \leq 2N} a_n \rho_j(n), \\
  \Sigma_{ \mathfrak{c}, r }^{(2)} (N) &:= \sum_{N < n \leq 2N} a_n n^{ir} \varphi_{ \mathfrak{c}, n } \left( \frac12 + ir \right), \\
  \Sigma_{j, k}^{(3)} (N) &:= i^\frac k2 \sqrt{ \frac{ (k - 1)! }{ (4\pi)^{k - 1} } } \sum_{N < n \leq 2N} a_n n^{ -\frac{k - 1}2 } \psi_{j, k}(n).
\end{align*}

Then we have the following

\begin{thm}\label{thm: large sieve inequalities}
  Let \( K \geq 1 \) and \( N \geq \frac12 \) be real numbers, \( a_n \) a sequence of complex numbers and \( \mathfrak{c} \) a cusp of \( \Gamma \).
  Then
  \begin{align*}
    \sum_{ | \kappa_j | \leq K } \left| \Sigma_j^{(1)} (N) \right|^2 &\ll \left( K^2 + \frac{ N^{1 + \varepsilon} }q \right) \| a_n \|_N^2, \\
    \sum_\mathfrak{c} \int_{-K}^K \! \left| \Sigma_{ \mathfrak{c}, r }^{(2)} (N) \right|^2 \, dr &\ll  \left( K^2 + \frac{ N^{1 + \varepsilon} }q \right) \| a_n \|_N^2, \\
    \sum_\twoln{2 \leq k \leq K, \, 2 \mid k}{ 1 \leq j \leq \theta_k(q) } \left| \Sigma_{k, j}^{(3)}(N) \right|^2 &\ll \left( K^2 + \frac{ N^{1 + \varepsilon} }q \right) \| a_n \|_N^2,
  \end{align*}
  where the implicit constants depend only on \( \varepsilon \).
\end{thm}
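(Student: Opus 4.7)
All three inequalities follow the same template, deducing the bound from the Kuznetsov (respectively Petersson) trace formula via a duality argument. I sketch the argument for the first inequality and indicate the modifications for the other two.

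By duality, the first inequality is equivalent to proving
\[
\sum_{N < n \le 2N} \left| \sum_{|\kappa_j| \le K} \beta_j \, \frac{\rho_j(n)}{\sqrt{\cosh(\pi \kappa_j)}} \right|^2 \ll \left( K^2 + \frac{N^{1+\varepsilon}}{q} \right) \sum_{|\kappa_j| \le K} |\beta_j|^2
\]
for arbitrary complex coefficients $\beta_j$. Expanding the square and using positivity, it suffices to bound, after pairing against the dual coefficients $a_m \overline{a_n}$ with $m, n \in (N, 2N]$, the spectral kernel
\[
\mathcal{K}(m, n) := \sum_j \frac{h(\kappa_j)}{\cosh(\pi \kappa_j)} \rho_j(m) \overline{\rho_j(n)},
\]
where $h$ is a smooth nonnegative majorant of the indicator of $[-K, K]$ on the full spectrum, including the exceptional points $i \kappa_j \in i[0, \theta]$.

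The first step is to exhibit $h$ as a Kuznetsov transform $h = \hat{f}$ of a suitable test function $f$ on $(0, \infty)$. A standard choice, following the strategy of Deshouillers--Iwaniec, is a Gaussian-type majorant such as $h(r) = \bigl(r^2 + \tfrac14\bigr) e^{-(r/K)^2}$, for which the preimage $f$ is smooth, concentrated at a scale depending on $\sqrt{mn}/q$ and $K$, and has derivative bounds at scale $1/K$ compatible with Lemma \ref{lemma: estimates for the Bessel transforms}. One arranges simultaneously that $\check{f}$ and $\tilde{f}$ are nonnegative on their spectral parameters, so that the Eisenstein and holomorphic contributions in Kuznetsov can be dropped with the correct sign. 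The second step is then to apply Theorem \ref{thm: Kuznetsov trace formula} to write
\[
\mathcal{K}(m, n) = \sum_{c \equiv 0 \, (q)} \frac{S(m, n; c)}{c} \, f\!\left( 4\pi \frac{\sqrt{mn}}{c} \right) - (\text{Eis.}) - (\text{holo.}),
\]
discard the last two terms, and insert the remaining Kloosterman expression into the dual quadratic form. The diagonal $m = n$ contributes at most $K^2 \, \|a_n\|_N^2$, accounting for the first term in the stated bound. For the off-diagonal one applies Weil's bound together with a Cauchy--Schwarz inequality in the $c, n$ variables, exploiting that only moduli $c \equiv 0 \, (q)$ appear and that $f$ decays rapidly once $c$ is much larger than $\sqrt{mn}$; a short computation then produces the $(N^{1+\varepsilon}/q) \, \|a_n\|_N^2$ contribution.

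The main obstacle will be the careful construction of $f$ so that all three Bessel transforms $\hat{f}, \check{f}, \tilde{f}$ have the correct sign on their respective spectra while simultaneously providing an efficient majorant on $[-K, K]$ and on the exceptional parameters; without the latter, one cannot discard the continuous and holomorphic parts for free, and without nonnegativity on $[-K, K]$ the duality step fails. The Eisenstein inequality is proved identically, expanding $|\Sigma^{(2)}_{\mathfrak{c}, r}(N)|^2$ and reading Kuznetsov in the opposite direction so that the roles of cuspidal and continuous spectra are interchanged. The holomorphic inequality is obtained from the Petersson trace formula (the third summand in Theorem \ref{thm: Kuznetsov trace formula}) with a majorant for the discrete weight index $k$; the same Weil-plus-Cauchy-Schwarz argument on the Kloosterman side then delivers the claimed bound, uniformly in the three cases.
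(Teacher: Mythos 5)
The paper does not prove Theorem~\ref{thm: large sieve inequalities}; it quotes it directly as Theorem~2 of Deshouillers--Iwaniec \cite{DeshIwan_KlSumsFourCoeffCuspForms}, so there is no in-paper argument to compare your proposal against. Judged on its own, your plan has the right scaffolding (Kuznetsov formula with a spectral majorant, separating a diagonal from an off-diagonal contribution), but the decisive step is wrong: estimating the off-diagonal Kloosterman contribution by Weil's bound plus Cauchy--Schwarz is quantitatively too lossy. With a test function $f$ for which $\hat f$ majorizes $\mathbf 1_{[-K,K]}$, the relevant moduli have $c \asymp \sqrt{mn}/K \asymp N/K$, and applying Weil termwise together with $\|a\|_1^2 \le N\|a_n\|_N^2$ gives a bound of order $N^{1+\varepsilon}(N/K)^{1/2}q^{-1}\|a_n\|_N^2$, which exceeds the target $N^{1+\varepsilon}q^{-1}\|a_n\|_N^2$ by the factor $(N/K)^{1/2}$; no rearrangement of Cauchy--Schwarz in the $c$ and $n$ variables repairs this.

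The actual proof does not estimate the Kloosterman sums individually at all. One opens $S(m,n;c)=\sum_{d\bar d\equiv 1\,(c)}e((dm+\bar d n)/c)$ and, after a separation-of-variables step so that the weight $f(4\pi\sqrt{mn}/c)$ factors into a function of $m$ times a function of $n$, the bilinear form reduces to sums over Farey fractions $d/c$ with $q\mid c$ of $\bigl|\sum_n a_n e(dn/c)\bigr|^2$. These are then controlled by the classical additive large sieve for well-spaced points, and it is precisely here that the cancellation inside the Kloosterman sums is genuinely used. The separation of variables, the construction of a test function whose transforms $\hat f,\check f,\tilde f$ carry the correct sign on the whole spectrum (including the exceptional segment), and the matching of the $1/c$ weight to the Farey sieve are all nontrivial points your sketch elides; but the first issue --- recovering the lost power of $N/K$ --- is the essential missing idea, not a technicality, and without it your argument does not prove the theorem.
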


When there is no averaging over \(n\), the bounds given by the large sieve inequalities are not optimal.
So, we also want to mention

\begin{lemma}\label{lemma: estimates for Fourier coefficients}
  Let \( K \geq 1 \) and \( n \geq 1 \).
  Then
  \begin{align}
    \sum_{ | \kappa_j | \leq K } \frac{ | \rho_j(n) |^2 }{ \cosh(\pi \kappa_j) } &\ll K^2 + (qKn)^\varepsilon (q, n)^\frac12 \frac{ n^\frac12 }q, \label{eqn: first estimate for Fourier coefficients} \\
    \sum_\mathfrak{c} \int_{-K}^K \! \left| \varphi_{ \mathfrak{c}, n } \left( \frac12 + ir \right) \right|^2 dr &\ll K^2 + (qKn)^\varepsilon (q, n)^\frac12 \frac{ n^\frac12 }q, \label{eqn: second estimate for Fourier coefficients} \\
    \sum_\twoln{2 \leq k \leq K, \, 2 \mid k}{ 1 \leq j \leq \theta_k(q) } \frac{ (k - 1)! }{ (4\pi n)^{k - 1} } \left| \psi_{j, k}(n) \right|^2 &\ll K^2 + (qKn)^\varepsilon (q, n)^\frac12 \frac{ n^\frac12 }q, \label{eqn: third estimate for Fourier coefficients}
  \end{align}
  where the implicit constants depend only on \( \varepsilon \).
\end{lemma}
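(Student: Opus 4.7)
All three bounds follow from a common template: apply a trace formula at \(m = n\) with a non-negative test function, then estimate the geometric side via Weil. We sketch the argument for \eqref{eqn: first estimate for Fourier coefficients}, since the other two are analogous.

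Apply the first Kuznetsov trace formula of Theorem \ref{thm: Kuznetsov trace formula} at \(m = n\) with a test function \(f \in C_c^\infty((0, \infty))\), depending on \(K\), whose Bessel transforms are non-negative on the entire spectrum (including the exceptional strip \(|\Im \kappa| \leq \theta\)) and satisfy \(\hat f(\kappa) \geq 1\) for \(|\kappa| \leq K\). Such an \(f\) can be constructed explicitly along the lines of Iwaniec's \emph{Spectral Methods of Automorphic Forms}, Chapter 16. Positivity on the spectrum then allows us to drop all terms except the cuspidal ones with \(|\kappa_j| \leq K\), yielding
\[ \sum_{|\kappa_j| \leq K} \frac{|\rho_j(n)|^2}{\cosh(\pi \kappa_j)} \ll \left| \sum_{c \equiv 0 \smod q} \frac{S(n, n; c)}{c} f\!\left( \frac{4\pi n}{c} \right) \right|. \]
The support of \(f\) confines \(c\) to a dyadic range depending on \(n\) and \(K\). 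Writing \(c = qc'\) and factoring \((n, qc') = (n, q) \cdot (n / (n, q), c')\), a combination of Weil's bound with a standard divisor sum over \(c'\) produces an estimate of the shape \(K^2 + (qKn)^\varepsilon (q, n)^{1/2} n^{1/2} / q\), where the \(K^2\) accommodates the small-\(c\) contribution treated trivially (reflecting the Weyl/Plancherel density), and the second term is the genuine gain from cancellation in Kloosterman sums.

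The Eisenstein bound \eqref{eqn: second estimate for Fourier coefficients} comes out of the very same identity, since the chosen test function also gives non-negative contributions on the continuous spectrum. For \eqref{eqn: third estimate for Fourier coefficients} we instead invoke the Petersson trace formula at \(m = n\), summed over even \(2 \leq k \leq K\): the diagonal \(\delta_{nn}\) contributes \(\asymp K\), which is absorbed by \(K^2\), while the Kloosterman-sum side is estimated exactly as before. The main obstacle is the construction of the test function \(f\): one needs simultaneous non-negativity of \(\hat f, \check f\) (and \(\tilde f\) for the holomorphic case) on the full spectrum, including the exceptional eigenvalues, together with a useful lower bound on \([-K, K]\) and a controlled \(L^\infty\)-norm and support. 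Once such an \(f\) is in hand, everything else reduces to Weil's bound and elementary divisor-sum bookkeeping.
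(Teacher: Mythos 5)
The paper's proof of this lemma is a bare citation (Motohashi \cite[Lemma 2.4]{Moto_SpecTheRieZetaFunc} for the first two estimates, Deshouillers--Iwaniec \cite[Proposition 4]{DeshIwan_KlSumsFourCoeffCuspForms} for the third), so the real comparison is between your sketch and the arguments in those references. Your overall strategy matches them: positivity from a well-chosen test function, drop unwanted spectral terms, and estimate the Kloosterman side via Weil and a divisor-sum over \(c = qc'\).

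There is one genuine technical issue to flag. Theorem \ref{thm: Kuznetsov trace formula}, as stated in the paper, converts Kloosterman sums into spectral sums and has \emph{no} diagonal term; using it literally with a single compactly supported \(f\) cannot produce the \(K^2\). Indeed, the support of \(f\) pins \(c\) to a single dyadic scale, so there is no ``small-\(c\) contribution treated trivially''; and Lemma \ref{lemma: estimates for the Bessel transforms} shows that for a fixed, non-oscillating \(f\), the transform \(\hat f\) is \(O(1 + |\log Y|)\) and decays beyond the support scale, so one cannot arrange \(\hat f(\kappa) \geq 1\) uniformly on \([-K, K]\) for large \(K\). The \(K^2\) is a Weyl-law effect and comes from the diagonal term \(\frac{\delta_{m,n}}{\pi^2}\int r\,h(r)\tanh(\pi r)\,dr\) in the complementary (``inverse'') Kuznetsov formula with spectral input \(h\), which is the version Motohashi actually uses and the one in Iwaniec's Chapter 16. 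With that formulation, taking \(h \geq 0\) on \(\mathbb{R}\cup i(-\tfrac12,\tfrac12)\), \(h \geq 1\) on \([-K,K]\), and rapidly decaying, the diagonal is \(\asymp K^2\), and the Weil estimate on the Kloosterman tail yields the \((q,n)^{1/2} n^{1/2+\varepsilon}/q\) term exactly as you describe. Your Petersson argument for \eqref{eqn: third estimate for Fourier coefficients} (diagonal \(\asymp K\), absorbed by \(K^2\)) is fine as written. So the approach is correct; the fix is to switch to the spectral-input form of Kuznetsov, toward which your reference to Iwaniec's book is already pointing.
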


\begin{proof}
  For the full modular group, \eqref{eqn: first estimate for Fourier coefficients} and \eqref{eqn: second estimate for Fourier coefficients} are proven in \cite[Lemma 2.4]{Moto_SpecTheRieZetaFunc}.
  Except for some obvious modifications, the proof applies as well to general Hecke congruence subgroups.
  The proof of \eqref{eqn: third estimate for Fourier coefficients} is a simpler variant of the proof of \cite[Proposition 4]{DeshIwan_KlSumsFourCoeffCuspForms}.
\end{proof}

Finally, to treat the exceptional eigenvalues we need a result, which we state here as

\begin{lemma} \label{lemma: lemma to treat the exceptional eigenvalues}
  Let \( X, q, h \geq 1 \) be such that \( h^\frac12 X \geq q \).
  Then
  \[ \sum_{ \kappa_j \text{ exc.} } | \rho_j(h) |^2 X^{ 4i \kappa_j } \ll (Xh)^\varepsilon \frac{ h^{2\theta} X^{4\theta} }{ q^{4\theta} } (h, q)^\frac12 \left( 1 + \frac{ h^\frac12 }q \right), \]
  where the implicit constant depends only on \( \varepsilon \).
\end{lemma}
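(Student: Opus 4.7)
I will apply the minus-sign version of the Kuznetsov trace formula (Theorem \ref{thm: Kuznetsov trace formula}) with $m=n=h$ to a non-negative test function $f(\xi) = w(X^2\xi)$, where $w \geq 0$ is a fixed smooth bump supported in $[1,2]$. For exceptional $\kappa_j = -it_j$ with $t_j \in (0,\theta]$, one has $2i\kappa_j = 2t_j$ real, so the integral
\[
\check f(\kappa_j) = \frac{4}{\pi}\cos(\pi t_j) \int_0^\infty K_{2t_j}(\xi)\, f(\xi)\, \frac{d\xi}{\xi}
\]
is positive, and the small-argument asymptotic $K_{2t}(\xi) \sim \tfrac{1}{2}\Gamma(2t)(\xi/2)^{-2t}$ gives $\check f(\kappa_j) \asymp X^{4t_j} = X^{4i\kappa_j}$. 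The hypothesis $h^{1/2}X \geq q$ ensures $hX^2 \geq q$, so the Kloosterman sums on the arithmetic side of the formula, supported on $c \asymp hX^2$, include valid multiples of $q$.

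By the positivity of $\check f$ on the exceptional part of the spectrum, the exceptional contribution is bounded from above by the Kloosterman side together with the absolute value of the non-exceptional cuspidal and Eisenstein contributions. The former is estimated via Weil's bound and a dyadic decomposition over $c \equiv 0 \pmod q$, yielding a contribution of order $(Xh)^\varepsilon (h,q)^{1/2}(h^{1/2}X/q)$. The latter two are controlled using the decay bounds for $\check f(r)$ from Lemma \ref{lemma: estimates for the Bessel transforms} together with the individual large-sieve estimates of Lemma \ref{lemma: estimates for Fourier coefficients}, which contribute at most $(Xhq)^\varepsilon (h,q)^{1/2}(1 + h^{1/2}/q)$.

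The main obstacle, and the reason the lemma achieves the sharper factor $h^{2\theta}X^{4\theta}/q^{4\theta}$ rather than the cruder $h^{1/2}X/q$, is to improve the Kloosterman analysis using Hecke multiplicativity. Passing to a Hecke eigenbasis (with an oldform/newform decomposition to treat the part of $h$ dividing $q^\infty$, which is the source of the $(h,q)^{1/2}$ factor), one uses the normalization $\rho_j(h) = \rho_j(1)\lambda_j(h)/\sqrt h$ for $(h,q)=1$ and the Kim--Sarnak bound $|\lambda_j(h)| \leq d(h)\,h^\theta$ on exceptional Hecke eigenvalues. This reduces the problem to estimating $\sum_{\text{exc}} |\rho_j(1)|^2 X^{4i\kappa_j}$, where a Hoffstein--Lockhart-type bound on the Petersson norm of exceptional newforms delivers the additional factor $q^{-4\theta}$. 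Finally, the sign ambiguity between $\rho_j(h)^2$ (as it appears in the minus Kuznetsov formula) and $|\rho_j(h)|^2$ is handled by splitting the exceptional basis according to whether the coefficients are real or purely imaginary, each sub-sum being bounded by the same argument up to an overall sign.
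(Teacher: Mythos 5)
The paper's proof is a two-line reduction to a result already in the literature and contains no Hecke theory whatsoever. Since \( i\kappa_j \leq \theta \) for every exceptional \( \kappa_j \) and \( h^{1/2}X/q \geq 1 \) by hypothesis, one has the elementary pointwise inequality
\[ X^{4i\kappa_j} \leq \left( \frac{h^{1/2}X}{q} \right)^{4\theta} \left( 1 + \frac{q}{h^{1/2}} \right)^{4i\kappa_j}, \]
simply because the right side is at least \( (h^{1/2}X/q)^{4i\kappa_j}(1 + q/h^{1/2})^{4i\kappa_j} = \bigl( X(1 + h^{1/2}/q) \bigr)^{4i\kappa_j} \geq X^{4i\kappa_j} \). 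Pulling out the constant factor \( (h^{1/2}X/q)^{4\theta} = h^{2\theta}X^{4\theta}/q^{4\theta} \) leaves \( \sum_{\kappa_j \text{ exc.}} |\rho_j(h)|^2 Y^{4i\kappa_j} \) with \( Y := 1 + q/h^{1/2} \geq 1 \), and this is exactly the quantity bounded by \cite[(16.58)]{IwaKow_ANT}: it is \( \ll (qYh)^\varepsilon (h,q)^{1/2} h^{1/2}Y/q = (qYh)^\varepsilon (h,q)^{1/2}(1 + h^{1/2}/q) \), which gives the lemma.

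Your plan correctly identifies the Kuznetsov--plus--Weil mechanism that underlies the cited estimate (your first paragraphs are essentially a re-derivation of IK (16.58)), and you correctly diagnose that the crude bound \( (h,q)^{1/2}h^{1/2}X/q \) must be upgraded by replacing the exponent \(1\) with \( 4\theta \). But that upgrade costs nothing beyond the inequality above, while the Hecke-theoretic apparatus you invoke to accomplish it --- passage to a Hecke eigenbasis, oldform/newform decomposition, the normalization \( \rho_j(h) = \rho_j(1)\lambda_j(h)/\sqrt{h} \), Kim--Sarnak bounds on Hecke eigenvalues, a Hoffstein--Lockhart norm bound supposedly producing \( q^{-4\theta} \) --- is both absent from the paper and not made precise in your sketch. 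In particular, you conflate the archimedean \( \theta \) of the lemma (defined by \( i\kappa_j \leq \theta \)) with the Ramanujan exponent at finite places, and it is not explained how a Hoffstein--Lockhart \emph{lower} bound on \( |\rho_j(1)|^2 \) combined with a density theorem would deliver precisely the factor \( q^{-4\theta} \). Had you simply cited IK (16.58) (or completed your re-derivation of it and stopped there), the lemma would follow in one line from the elementary inequality.
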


\begin{proof}
  We have
  \[ \sum_{ \kappa_j \text{ exc.} } | \rho_j(h) |^2 X^{ 4i \kappa_j } \leq \left( \frac{ h^\frac12 X }q \right)^{4\theta} \sum_{ \kappa_j \text{ exc.} } | \rho_j(h) |^2 \left( 1 + \frac q{ h^\frac12 } \right)^{ 4i \kappa_j }. \]
  To treat the sum on the right hand side we make use of \cite[(16.58)]{IwaKow_ANT}, which says that
  \[ \sum_{ \kappa_j \text{ exc.} } | \rho_j(h) |^2 Y^{ 4i \kappa_j } \ll (qYh)^\varepsilon (h, q)^\frac12 \frac{ h^\frac12 Y }q \quad \text{for} \quad Y \geq 1, \]
  and the result follows.
\end{proof}

\section{Proof of Theorems \ref{thm: main theorem for d(n)} and \ref{thm: main theorem for a(n)}} \label{section: main proof}

Our method applies to \( D^\pm(x; h) \) as well as \( A^\pm(x; h) \), and it will pose no further difficulty to treat both cases simultaneously.
With this in mind, we let \( \alpha(n) \) be a placeholder for \( d(n) \) or \( a(n) \).

From now on we consider \(x\) and \(h\) as fixed.
Let \( w : \mathbb{R} \rightarrow [0, \infty) \) be a smooth function with compact support in \( \left[ \frac12, 1 \right] \) such that
\[ w^{ (\nu) } \ll \frac1{ \Omega^\nu } \quad \text{for } \nu \geq 0, \quad \text{and} \quad \int \! \big| w^{ (\nu) }(\xi) \big| d\xi \ll \frac1{ \Omega^{\nu - 1} } \, d\xi \quad \text{for} \quad \nu \geq 1, \]
where \( \Omega := x^{-\omega} \) with \( 0 \leq \omega < \frac16. \)
We will look at the sum
\begin{align*}
  \Phi(w) := \sum_n d_3(n) \alpha(n + h) w\left( \frac nx \right), \quad h \in \mathbb{Z},  h \neq 0,
\end{align*}
with the aim of showing that
\begin{align}\label{eqn: main sum}
  \Phi(w) = M(w) + \BigO{ x^{\frac56 + \varepsilon} \left( x^\frac\theta3 + x^\frac\omega2 \right) \left( 1 + \frac{ h^\frac14}{ x^\frac16 } \right) }
\end{align}
for \( h \ll x^\frac56 \) and any \( \varepsilon > 0 \); recall that \(\theta\) was defined at \eqref{eqn: Kim-Sarnak-bound}.
The main term \( M(w) \) vanishes if \( \alpha = a \), and otherwise has the form \( M(w) = x Q_{w, h}(\log x) \) with a cubic polynomial \( Q_{w, h} \).
The choice \( \omega = 0 \) gives Theorem \ref{thm: main theorem for smoothed d(n)} and the second bound in Theorem \ref{thm: main theorem for a(n)}, while the choice \( \omega = \frac19 \) together with a suitable weight function \(w\) gives Theorem \ref{thm: main theorem for d(n)} and the first bound in Theorem \ref{thm: main theorem for a(n)}.

We will need a smooth decomposition of the ternary divisor function, for which we will use a similar construction as the one used in \cite{Meur_BinAddDivProb}.
Let \( v : \mathbb{R} \rightarrow [0, \infty) \) be a smooth function such that
\[ \supp v \subset [-2, 2], \quad \text{and} \quad v(\xi) = 1 \quad \text{for } \xi \in [-1, 1], \]
and define
\[ v_1(\xi) := v\left( \frac\xi{ x^\frac13 } \right), \quad v_2(\xi) := v\left( \frac\xi{ \sqrt{ \frac xa } } \right). \]
If \( abc \leq x \), then obviously
\[ \left( v_1(a) - 1 \right) \left( v_1(b) - 1 \right) \left( v_1(c) - 1 \right) = 0 \quad \text{and} \quad \left( v_2(b) - 1 \right) \left( v_2(c) - 1 \right) = 0, \]
and hence
\[ d\left( \frac na \right) = \sum_{bc = \frac na} v_2(b) \left( 2 - v_2(c) \right) \]
as well as
\begin{align}
  d_3(n) &= \sum_{abc = n} \left( v_1(a) v_1(b) v_1(c) - 3 v_1(a) v_1(b) + 3 v_1(a) \right) \notag \\
    &= \sum_{abc = n} \left( v_1(a) v_1(b) v_1(c) - 3 v_1(a) v_1(b) \right) + 3 \sum_{a \mid n} d\left( \frac na \right) \notag \\
    &= \sum_{abc = n} h(a, b, c) \label{eqn: decomposition of d_3(n)}
\end{align}
with
\[ h(a, b, c) := v_1(a) v_1(b) v_1(c) - 3 v_1(a) v_1(b) + 3 v_1(a) v_2(b) ( 2 - v_2(c) ). \]
Note that this function is non-zero only when
\[ a, b \ll c. \]

It will be useful to use a partition of unity on \( (0, \infty) \) constructed as follows.
Let \( h_X \) be smooth and compactly supported functions such that
\[ \supp h_X \subset \left[ \frac X2, 2X \right], \quad h^{ (\nu) } \ll \frac1{ X^\nu } \quad \text{and} \quad \sum_X h_X = 1, \]
where the last sum runs over powers of \(2\).
Then we set
\[ h_{ABC}(a, b, c) := h(a, b, c) h_A(a) h_B(a) h_C(c) \]
and
\[ \Phi_{ABC}(w) := \sum_{a, b, c} h_{ABC}(a, b, c) \alpha(abc + h) w\left( \frac{abc}x \right), \]
so that
\[ \Phi(w) = \sum_{A, B, C} \Phi_{ABC}(w). \]
Note that we can bound the derivatives of \( h_{ABC} \) by
\[ \frac{ \partial^{\nu_1 + \nu_2 + \nu_3} }{ \partial a^{\nu_1} \partial b^{\nu_2} \partial c^{\nu_3} } h_{ABC}(a, b, c) \ll \frac1{ A^{\nu_1} B^{\nu_2} C^{\nu_3} }. \]
Furthermore we can assume
\begin{align*}
  ABC \asymp x \quad \text{and} \quad A \ll B \ll C,
\end{align*}
since otherwise \( \Phi_{ABC}(w) \) is empty, and since our argument is symmetric in \(A\) and \(B\).
This also implies that
\[ A \ll x^\frac13, \quad AB^2 \ll x \quad \text{and} \quad AB \ll x^\frac23. \]

\subsection{Use of the Voronoi summation formula}

We have
\begin{align*}
  \Phi_{ABC}(w) &= \sum_{a, b} \sum_{ m \equiv h \smod {ab} } \alpha(m) w\left( \frac{m - h}x \right) h_{ABC} \left( a, b, \frac{m - h}{ab} \right) \\
    &= \sum_{a, b} \sum_{ m \equiv h \smod{ab} } \alpha(m) f(m; a, b),
\end{align*}
where we have set
\[ f(\xi; a, b) := w\left( \frac{\xi - h}x \right) h_{ABC} \left( a, b, \frac{\xi - h}{ab} \right). \]
Note that
\[ \supp f( \,\bullet\, ; a, b ) \asymp x \quad \text{and} \quad \frac{ \partial^{\nu_1 + \nu_2} }{ \partial \xi^{\nu_1} \partial b^{\nu_2} } f(\xi; a, b) \ll \frac1{ (x \Omega)^{\nu_1} B^{\nu_2} }. \]

Now we use Theorem \ref{thm: Voronoi summation for d(n) in arithmetic progressions} in the case \( \alpha = d \) to get
\begin{align*}
  \Phi_{ABC}(w) &= \sum_{a, b} \frac1{ab} \int \! \lambda_{h, ab}(\xi) f(\xi; a, b) \, d\xi \\
    &\qquad - 2\pi \sum_\twoln{a, b, c}{c \mid ab} \frac1{ab} \sum_m d(m) \frac{ S(h, m; c) }c \int_0^\infty \! Y_0\left( \frac{4\pi}c \sqrt{m \xi} \right) f(\xi; a, b) \, d\xi \\
    &\qquad + 4 \sum_\twoln{a, b, c}{c \mid ab} \frac1{ab} \sum_m d(m) \frac{ S(h, -m; c) }c \int_0^\infty \! K_0\left( \frac{4\pi}c \sqrt{m \xi} \right) f(\xi; a, b) \, d\xi,
\end{align*}
and Theorem \ref{thm: Voronoi summation for a(n) in arithmetic progressions} in the case \( \alpha = a \), which gives
\[ \Phi_{ABC}(w) = (-1)^\frac\kappa2 2\pi \sum_\twoln{a, b, c}{c \mid ab} \frac1{ab} \sum_m a(m) \frac{ S(h, m; c) }c \int_0^\infty \! J_{\kappa - 1} \left( 4\pi \frac{ \sqrt{m\xi} }c \right) f(\xi; a, b) \, d\xi. \]
The possible main term will be given by
\[ M_0(w) := \sum_{A, B, C} \sum_{a, b} \frac1{ab} \int \! \lambda_{h, ab}(\xi) f(\xi; a, b) \, d\xi, \]
which we will compute at the end.
First we want to treat the other sums and show that they are small enough.

Here we can restate the outer sum as follows
\begin{align*}
  \sum_\twoln{a, b, c}{c \mid ab} (\ldots) = \sum_\twoln{a, b, c, t}{ab = ct} (\ldots) &= \sum_s \sum_\twoln{a_1, t_1}{ (a_1, t_1) = 1 } \sum_\twoln{c, b}{ c t_1 = a_1 b } (\ldots) &\text{with \( a_1 := \frac as, t_1 := \frac ts \)} \\
    &= \sum_s \sum_\twoln{a_1, t_1}{ (a_1, t_1) = 1 } \sum_{a_1 \mid c} (\ldots) &\text{with \( b = \frac c{a_1} t_1 \)} \\
    &= \sum_s \sum_{a_1, t_1} \sum_{ r \mid (a_1, t_1) } \mu(r) \sum_{a_1 \mid c} (\ldots) \\
    &= \sum_{s, r} \sum_{a_2, t_2} \mu(r) \sum_{a_2 r \mid c} (\ldots) &\text{with \( a_2 := \frac{a_1}r, t_2 := \frac{t_1}r \)}.
\end{align*}
We set
\begin{align} \label{eqn: definition of F}
  F^\pm(c, m) := \frac{ar}c \int_0^\infty \! B^\pm\left( \frac{4 \pi}c \sqrt{m \xi} \right) f\left( \xi; ars, \frac{ct}a \right) \, d\xi
\end{align}
with
\begin{align*}
  B^+(\xi) &= Y_0(\xi), \quad & &B^-(\xi) = K_0(\xi), \quad & &\text{if \( \alpha = d \),} \\
  B^+(\xi) &= J_{\kappa - 1}(\xi), \quad & &B^-(\xi) = 0, \quad & &\text{if \( \alpha = a \),}
\end{align*}
and after renaming \(a_2\) and \(t_2\), we end up with
\begin{align*}
  R^\pm_{ABC} &:= \sum_{r, s, t} \frac{ \mu(r) }{r^2 st} \sum_a \sum_m \alpha(m) \sum_{ar \mid c} \frac{ S(h, \pm m; c) }c \frac{ F^\pm(c, m) }a \\
    &= \sum_{r, s, t} \frac{ \mu(r) }{r^2 st} \sum_a \frac{ R^\pm_{ABC}(a; r, s, t) }a
\end{align*}
where
\[ R^\pm_{ABC}(a; r, s, t) := \sum_m \alpha(m) \sum_{ar \mid c} \frac{ S(h, \pm m; c) }c F^\pm(c, m), \]
for which we need to find good bounds.
Note that the sums over \(a\) and \(c\) are supported in
\[ a \asymp \frac A{rs} \quad \text{and} \quad c \asymp \frac{AB}{rst}. \]
The function \( F^\pm(c, m) \) can be bound by
\[ F^\pm(c, m) \ll x^{1 + \varepsilon} \frac A{sc} \asymp x^{1 + \varepsilon} \frac{rt}B, \]
however, when \( m \gg \frac{ c^2 }x \) we can use \eqref{eqn: consequence of the recurrence relations} to get
\[ F^+(c, n) \ll \frac1{ x^{\frac\nu2 - \frac34} \Omega^{\nu - 1} } \frac{ c^{\nu - \frac12} }{ n^{\frac\nu2 + \frac14} } \frac As \quad \text{and} \quad F^-(c, n) \ll \frac1{ x^{\frac\nu2 - \frac34} } \frac{ c^{\nu - \frac12} }{ n^{\frac\nu2 + \frac14} } \frac As. \]
We set
\[ M_0^- := \frac{x^\varepsilon}x \left( \frac{AB}{rst} \right)^2 \quad \text{and} \quad M_0^+ := \frac{x^\varepsilon}{x \Omega^2} \left( \frac{AB}{rst} \right)^2, \]
and a standard exercise then shows that we can cut the sum over \(m\) in \( R_{ABC}^\pm \) at \( M_0^\pm \), so that it is sufficient to look at the sums
\begin{align}
  R^\pm_{ABC}(M) := \sum_{M < m \leq 2M} \alpha(m) \sum_{ar \mid c} \frac{ S(h, \pm m; c) }c F^\pm(c, m), \label{eqn: cut off sum}
\end{align}
where we have divided the range of summation over \(n\) into dyadic intervalls \( [M, 2M] \) with \( M = \frac{M_0^\pm}{2^k} \), where \(k\) runs over positive integers.

\subsection{Auxiliary estimates}

We want to use the Kuznetsov formula given in Theorem \ref{thm: Kuznetsov trace formula} for the inner sum in \eqref{eqn: cut off sum}.
To bring the functions \( F^\pm(c, n) \) into the right shape, we define
\[ \tilde F^\pm(c, m) := h(m) \frac{arc}{ 4\pi \sqrt{|h|m} } \int_0^\infty \! B^\pm\left( c \sqrt{ \frac\xi{|h|} } \right) f\left( \xi; ars, \frac{ 4\pi \sqrt{|h|m} }{c} \frac ta \right) \, d\xi, \]
where \( h(m) \) is a smooth and compactly supported bump function such that
\[ h(m) \equiv 1 \quad \text{for} \quad m \in [M, 2M], \quad \supp h \asymp M \quad \text{and} \quad h^{ (\nu) }(m) \ll \frac1{ M^\nu }. \]
Then we have
\[ F^\pm(c, m) = \tilde F^\pm \left( \frac{ 4\pi \sqrt{|h|m} }c, m \right) \quad \text{for} \quad m \in [M, 2M]. \]

In order to seperate the variable \(m\) we use Fourier inversion.
First define
\[ G_0(\lambda) := x^{1 + \varepsilon} \frac{rt}B \min\left( M, \frac1\lambda, \frac1{ M \lambda^2 } \right), \]
which is just a normalization factor.
We have
\[ \tilde F^\pm(c, m) = \int \! G_0(\lambda) G_\lambda^\pm(c) e(\lambda m) \, d\lambda, \quad G_\lambda^\pm(c) := \frac1{ G_0(\lambda) } \int \! \tilde F^\pm(c, m) e(-\lambda m) \, dm, \]
so that
\[ R_{ABC}^\pm(M) = \int \! G_0(\lambda) \sum_{M < m \leq 2M} \alpha(m) \e(\lambda m) \sum_{ar \mid c} \frac{ S(h, \pm m; c) }c G_\lambda^\pm\left( \frac{ 4\pi \sqrt{|h|m} }c \right) \, d\lambda. \]
Before going on, we need some good estimates for the Bessel transforms occuring in the Kuznetsov formula.
For convenience set
\[ W := \sqrt{ |h| M } \frac{rst}{AB} \quad \text{and} \quad Z := \sqrt{xM} \frac{rst}{AB}. \]

\begin{lemma} \label{lemma: bounds for the Bessel transforms of G}
  We have for \( M \ll M_0^- \),
  \begin{align}
    \hat G_\lambda^\pm(ic), \check G_\lambda^\pm(ic) &\ll W^{-2c} \quad & &\text{for} \quad 0 \leq c < \frac14, \label{eqn: First bound for Bessel transforms of G} \\
    \hat G_\lambda^\pm(c), \check G_\lambda^\pm(c), \tilde G_\lambda^\pm(c) &\ll \frac{ x^\varepsilon }{1 + c^\frac52} \quad & &\text{for} \quad c \geq 0. \label{eqn: Second bound for Bessel transforms of G}
  \end{align}
  If \( M_0^- \ll M \ll M_0^+ \), we have for any \( \nu \geq 0 \),
  \begin{align}
    \hat G_\lambda^\pm(ic), \check G_\lambda^\pm(ic) &\ll x^{-\nu} \quad & &\text{for} \quad 0 \leq c < \frac14, \label{eqn: Third bound for Bessel transforms of G} \\
    \hat G_\lambda^\pm(c), \check G_\lambda^\pm(c), \tilde G_\lambda^\pm(c) &\ll \frac{ x^\varepsilon }{ Z^\frac52 } \left( \frac Zc \right)^\nu \quad & &\text{for} \quad c \geq 0. \label{eqn: Fourth bound for Bessel transforms of G}
  \end{align}
\end{lemma}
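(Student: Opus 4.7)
The plan is to identify the $\eta$-support and oscillation structure of $G_\lambda^\pm(\eta)$, and then reduce to Lemmas \ref{lemma: estimates for the Bessel transforms}, \ref{lemma: estimates for the Bessel transforms of oscillating functions for large alpha}, and \ref{lemma: estimates for the Bessel transforms of oscillating functions for alpha near 1}. First, inspecting the definition of $\tilde F^\pm(\eta, m)$, the factor $f(\xi; ars, 4\pi\sqrt{|h|m}t/(\eta a))$ is nonzero only when $\xi \asymp x$ and when its second argument has size $\asymp B$; the latter forces $\eta \asymp W$. The normalization $G_0(\lambda)$ was designed precisely so that integration by parts in $m$ yields $|G_\lambda^\pm(\eta)| \ll 1$ uniformly in $\lambda$, and since the Fourier inversion in $m$ commutes with $\partial_\eta$, derivative bounds for $G_\lambda^\pm$ in $\eta$ transfer directly from those of $\tilde F^\pm(\eta, m)$.

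In the case $M \ll M_0^-$, the Bessel argument $\eta\sqrt{\xi/|h|} \asymp Z$ is $\ll 1$, so $B^\pm$ is in its non-oscillatory regime and each $\eta$-derivative of $\tilde F^\pm$ costs only a factor $1/\eta \asymp 1/W$. Therefore $\partial_\eta^\nu G_\lambda^\pm \ll W^{-\nu}$, and Lemma \ref{lemma: estimates for the Bessel transforms} applied with $X = Y = W$ yields \eqref{eqn: First bound for Bessel transforms of G} and \eqref{eqn: Second bound for Bessel transforms of G} directly.

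In the case $M_0^- \ll M \ll M_0^+$, we have $Z \gg 1$ and the Bessel function oscillates. I would use Lemma \ref{lemma: Bessel functions as oscillating functions} to replace $B^\pm(\eta\sqrt{\xi/|h|})$ by $2\Re(\e(\eta\sqrt{\xi/|h|}/(2\pi))\, v(\eta\sqrt{\xi/|h|}/\pi))$ with $v^{(\mu)} \ll (\eta\sqrt{\xi/|h|})^{-\mu-1/2}$, and then integrate by parts once in the $\xi$-integral. The phase $\eta\sqrt{\xi/|h|}$ is non-stationary on $\xi \asymp x$ with derivative $\asymp W/\sqrt{|h|x}$, so one integration by parts gains the factor $\sqrt{|h|x}/W \cdot (1/x) = 1/Z$; combined with the amplitude bound $v \ll Z^{-1/2}$, this produces a pointwise factor $Z^{-3/2}$. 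Hence, up to this factor, $G_\lambda^\pm(\eta)$ decomposes as a sum of two oscillating terms of the form $\e(\eta\alpha/(2\pi))\, \tilde w_\pm(\eta)$ with frequency $\alpha = \sqrt{x/|h|}$ and smooth window $\tilde w_\pm$ supported in $\eta \asymp W$ satisfying $\tilde w_\pm^{(\nu)} \ll W^{-\nu}$. Since $\alpha X = Z \gg 1$, Lemma \ref{lemma: estimates for the Bessel transforms of oscillating functions for large alpha} applies when $W \ll 1$, while the subcase $W \gg 1$ is handled by repeating the proof of Lemma \ref{lemma: estimates for the Bessel transforms of oscillating functions for alpha near 1} with the stationary point of the phase shifted from $\cosh \zeta = 1$ to $\cosh \zeta = \alpha$. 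Multiplying the resulting bounds by the extra gain $Z^{-3/2}$ turns the $Z^{-1}$ in Lemma \ref{lemma: estimates for the Bessel transforms of oscillating functions for large alpha}(ii) into the sharp $Z^{-5/2}$ and matches \eqref{eqn: Third bound for Bessel transforms of G} and \eqref{eqn: Fourth bound for Bessel transforms of G}.

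The main obstacle is the careful stationary-phase bookkeeping in the oscillatory case: one needs to verify that the oscillation $\e(\eta\sqrt{x/|h|}/(2\pi))$ survives both the $\xi$-integration and the subsequent Fourier inversion in $m$, and that the joint gain from the Bessel asymptotic $B^\pm(u) \ll u^{-1/2}$ and from one integration by parts in $\xi$ combines to the $Z^{-3/2}$ enhancement needed to reach the target $Z^{-5/2}$. Once this structural decomposition is in place, the rest of the proof is essentially a transcription of the three Lemmas above to $G_\lambda^\pm$.
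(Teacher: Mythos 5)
Your overall strategy matches the paper's: for $M \ll M_0^-$ the Bessel argument is $\ll 1$, so one estimates $\tilde F^\pm$ and its first two $m$-derivatives pointwise in $c$ and applies Lemma \ref{lemma: estimates for the Bessel transforms} with $X = Y \asymp W$; for $M_0^- \ll M \ll M_0^+$ one expands $B^\pm$ via Lemma \ref{lemma: Bessel functions as oscillating functions}, integrates by parts once in $\xi$ to peel off the factor of $c$ and pick up the $Z^{-3/2}$ amplitude gain, and then feeds the resulting function $\e(c\alpha)\tilde w(c)$ into Lemma \ref{lemma: estimates for the Bessel transforms of oscillating functions for large alpha} with $\alpha \asymp \sqrt{x/|h|}$ and $X \asymp W$. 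This is exactly what the paper does.

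Two points to tighten, one of which is a real gap. First, your fallback subcase ``$W\gg 1$, handled by adapting Lemma \ref{lemma: estimates for the Bessel transforms of oscillating functions for alpha near 1}'' never actually occurs and in any case is not worked out: Lemma \ref{lemma: estimates for the Bessel transforms of oscillating functions for alpha near 1} is tailored to $|\alpha-1|\ll X^\varepsilon/X$, whereas here $\alpha \asymp \sqrt{x/|h|}$ is large, so it is not a matter of ``shifting the stationary point.'' The paper instead verifies directly that $W = \sqrt{|h|M}\,rst/(AB) \ll x^{\omega+\varepsilon}\sqrt{|h|/x} \ll x^{-\varepsilon}$ under $M\ll M_0^+$, $\omega<1/6$, $h\ll x^{2/3}$, so $X=W\ll 1$ always and only Lemma \ref{lemma: estimates for the Bessel transforms of oscillating functions for large alpha} is needed. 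You should verify this smallness of $W$ rather than invent a second case. Second, your gain bookkeeping ``$\sqrt{|h|x}/W\cdot(1/x)=1/Z$'' uses $\partial_\xi f \ll 1/x$, but in fact $\partial_\xi f \ll 1/(x\Omega)$ since $w'$ is only $\ll \Omega^{-1}$; the naive pointwise bound is $Z^{-3/2}\Omega^{-1}$, not $Z^{-3/2}$. The paper recovers the loss by recording $\tilde w^{(\nu)}(c) \ll (W^{1-\nu}/Z^{3/2})\,C(\xi)$ with $C(\xi) = 1 + |w'((\xi-h)/x)|$ and using $\int C(\xi)\,d\xi \ll x$ (from $\int|w'|\ll 1$) after applying Lemma \ref{lemma: estimates for the Bessel transforms of oscillating functions for large alpha} inside the $\xi$-integral. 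Without this refinement your argument would give \eqref{eqn: Fourth bound for Bessel transforms of G} only with an extra factor $\Omega^{-1}$.
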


\begin{proof}
  Since all occurring integrals can be interchanged, we can look directly at the Bessel transforms of \( \tilde F^\pm(c, m) \) and its first two partial derivatives in \(m\).
  We will confine ourselves with the treatment of  \( \tilde F^\pm(c, m) \), since the corresponding estimates for the derivatives can be shown the same way.
  
  First we want to use Lemma \ref{lemma: estimates for the Bessel transforms} to prove the first two bounds.
  Again we can look directly at the function inside the integral over \(\xi\), given by
  \[ H_1(c) := c B^\pm\left( c \sqrt{ \frac\xi{ |h| } } \right) f\left( \xi; ars, \frac{ 4\pi \sqrt{ |h| m } }c \frac ta \right), \]
  for which we have the bounds
  \[ \supp H_1(c) \asymp W \quad \text{and} \quad H_1^{ (\nu) }(c) \ll x^\varepsilon W \left( \frac{ x^\varepsilon }W \right)^\nu. \]
  Hence by the mentioned lemma
  \begin{align*}
    \hat H_1(ic), \check H_1(ic) &\ll W^{1 - 2c} \quad \text{for} \quad 0 \leq c < \frac14, \\
    \hat H_1(c), \check H_1(c), \tilde H_1(c) &\ll \frac{ x^\varepsilon W }{ 1 + c^\frac52 } \quad \text{for} \quad c \geq 0,
  \end{align*}
  from which we get \eqref{eqn: First bound for Bessel transforms of G} and \eqref{eqn: Second bound for Bessel transforms of G}.
  
  When \( M \gg M_0^- \), oscillation effects come into play.
  By using Lemma \ref{lemma: Bessel functions as oscillating functions} and partially integrating once over \(\xi\), we get
  \[ \tilde F^+(c, m) = -h(m) \frac{ar}{ 2\pi i \sqrt{ |h| m } } \Re\left( \int_0^\infty \! \e\left( \frac c{2\pi} \sqrt{ \frac\xi{ |h| } } \right) \tilde w(c) \, d\xi \right) \]
  with
  \[ \tilde w(c) := \frac\partial{\partial \xi} \left( \sqrt{ \xi |h| } v_Y\left( \frac c\pi \sqrt{ \frac\xi{ |h| } } \right) f\left( \xi; ars, \frac{ 4\pi t \sqrt{ |h| m } }{ac} \right) \right). \]
  It is hence enough to look at
  \[ H_2(c) := \e\left( \frac c{2\pi} \sqrt{ \frac\xi{ |h| } } \right) \tilde w(c), \]
  where we have the bounds
  \[ \supp \tilde w \asymp W, \quad \text{and} \quad \tilde w^{ (\nu) }(\xi) \ll \frac{ W^{1 - \nu} }{ Z^\frac32 } C(\xi) \quad \text{with} \quad C(\xi) := 1 + \left| w'\left( \frac{\xi - h}x \right) \right|. \]
  We use Lemma \ref{lemma: estimates for the Bessel transforms of oscillating functions for large alpha} with \( \alpha = \frac1{2\pi} \sqrt{ \frac\xi{ |h| } } \) and \( X = W \), which is possible since
  \[ W \ll x^{\omega + \varepsilon} \sqrt{ \frac{ |h| }x } \ll \frac1{ x^\varepsilon } \quad \text{and} \quad \alpha W \asymp Z \gg x^\varepsilon, \]
  and so we get
  \begin{align*}
    \hat H_2(ic), \check H_2(ic) &\ll x^{-\nu} \quad \text{for} \quad 0 \leq c < \frac14, \\
    \hat H_2(c), \check H_2(c), \tilde H_2(c) &\ll x^\varepsilon C(\xi) \frac W{ Z^\frac52 } \left( \frac Zc \right)^\nu \quad \text{for} \quad c \geq 0,
  \end{align*}
  which then give \eqref{eqn: Third bound for Bessel transforms of G} and \eqref{eqn: Fourth bound for Bessel transforms of G}.
\end{proof}

\subsection{Use of the Kuznetsov trace formula}

Now we are ready to apply the Kuznetsov trace formula.
We will only look at \( R_{ABC}^+(M) \) and we will assume that \( h \geq 1 \), since all other cases can be treated in very similar ways.
Here we use Theorem \ref{thm: Kuznetsov trace formula} on the inner sum,
\begin{align*}
  \sum_{ar \mid c} \frac{ S(h, m; c) }c &G_\lambda^+\left( \frac{ 4\pi \sqrt{hm} }c \right) = \sum_{j = 1}^\infty \frac{ \overline{\rho_j}(h) \rho_j(m) }{ \cosh(\pi \kappa_j) } \hat G_\lambda^+(\kappa_j) \\
    &+ \frac1\pi \sum_\mathfrak{c} \int_{-\infty}^\infty \! m^{ir} \overline{ \varphi_{ \mathfrak{c}, h } } \left( \frac12 + ir \right) \varphi_{ \mathfrak{c}, m } \left( \frac12 + ir \right) \hat G_\lambda^+(r) \, dr \\
    &+ \frac1{2\pi} \sum_\twoln{k \equiv 0 \smod 2}{ 1 \leq j \leq \theta_k(ar) } \frac{ i^k (k - 1)! }{ ( 4\pi \sqrt{m} )^{k - 1} } \overline{ \psi_{j, k} }(h) \psi_{j, k}(m) \tilde G_\lambda^+(k - 1).
\end{align*}
Hence we can write our sum as
\[ R_{ABC}^+(M) = \int \! G_0(\lambda) \left( \Xi_\text{exc.}(M) + \Xi_1(M) + \frac1\pi \Xi_2(M) + \frac1{2\pi} \Xi_3(M) \right) \, d\lambda, \]
where
\begin{align*}
  \Xi_\text{exc.}(M) &= \sum_{ \kappa_j \text{ exc.} } \hat G_\lambda^+(\kappa_j) \left( \frac{ \overline{\rho_j}(h) }{ \sqrt{ \cosh(\pi \kappa_j) } } \right) \Sigma_j^{( \text{exc.} )}(M), \\
  \Xi_1(M) &= \sum_{\kappa_j \geq 0} \hat G_\lambda^+(\kappa_j) \left( \frac{ \overline{\rho_j}(h) }{ \sqrt{ \cosh(\pi \kappa_j) } } \right) \Sigma_j^{(1)}(M), \\
  \Xi_2(M) &= \sum_\mathfrak{c} \int \! \hat G_\lambda^+(r) \left( \overline{ \varphi_{ \mathfrak{c}, h } }\left( \frac12 + ir \right) \right) \Sigma_{ \mathfrak{c},r  }^{(2)}(M) \, dr, \\
  \Xi_3(M) &= \sum_\twoln{k \equiv 0 \smod 2}{ 1 \leq j \leq \theta_k(ar) } \tilde G_\lambda^+(k - 1) \left( i^\frac k2 \sqrt{ \frac{ (k - 1)! }{ (4\pi)^{k - 1} } } \overline{ \psi_{j, k} }(h) \right) \Sigma_{j, k}^{(3)}(M),
  \intertext{and}
  \Sigma_j^{( \text{exc.} )}(M) := \Sigma_j^{(1)} (M) &:= \frac1{ \sqrt{ \cosh(\pi \kappa_j) } } \sum_{M < m \leq 2M} \alpha(m) \e(\lambda m) \rho_j(m), \\
  \Sigma_{ \mathfrak{c}, r }^{(2)} (M) &:= \sum_{M < m \leq 2M} \alpha(m) \e(\lambda m) m^{ir} \varphi_{ \mathfrak{c}, m } \left( \frac12 + ir \right), \\
  \Sigma_{j, k}^{(3)} (M) &:= i^\frac k2 \sqrt{ \frac{ (k - 1)! }{ (4\pi)^{k - 1} } } \sum_{M < m \leq 2M} \alpha(m) \e(\lambda m) m^{ -\frac{k - 1}2 } \psi_{j, k}(m).
\end{align*}

\( \Xi_\text{exc.}(M) \) needs a special treatment, which we will do in the following section.
First, we want to look at the other summands, and here we will restrict ourselves to \( \Xi_1 \), since the treatment of the other sums can be done along the same lines.

First assume \( M \ll M_0^- \).
We divide \( \Xi_1(M) \) into two parts:
\[ \Xi_1(M) = \sum_{\kappa_j \leq 1} (\ldots) + \sum_{1 < \kappa_j} (\ldots) =: \Xi_{ 1 \text{a} }(M) + \Xi_{ 1 \text{b} }(M). \]
For \( \Xi_{ 1 \text{a} }(M) \) we get using \eqref{eqn: Second bound for Bessel transforms of G}, Cauchy-Schwarz, Theorem \ref{thm: large sieve inequalities} and Lemma \ref{lemma: estimates for Fourier coefficients},
\begin{align*}
  \Xi_{ 1 \text{a} }(M) &\ll \max_{0 \leq \kappa_j \leq 1} \left| \hat G_\lambda^+(\kappa_j) \right| \sum_{\kappa_j \leq 1} \frac{ | \rho_j(h) | }{ \sqrt{ \cosh(\pi \kappa_j) } } \left| \Sigma_j^{(1)}(M) \right| \\
    &\ll x^\varepsilon \left( 1 + \frac{ (ar, h)^\frac12 h^\frac12 }{ar} \right)^\frac12 \left( 1 + \frac M{ar} \right)^\frac12 M^\frac12 \\
    &\ll x^\varepsilon \frac1{rt} \left( \frac{AB}{ x^\frac12 } + \frac{ A^\frac32 B^2 }x \right) \left( 1 + \frac{ (ar, h)^\frac14 h^\frac14 }{ A^\frac12 } \right)
\end{align*}
so that
\[ \int \! G_0(\lambda) \Xi_{ 1 \text{a} }(M) \, d\lambda \ll x^{\frac56 + \varepsilon} \left( 1 + \frac{ (ar, h)^\frac14 h^\frac14 }{ x^\frac16 } \right). \]
We split up the remainig sums into dyadic segments
\[ \Xi_1(M, K) := \sum_{K < \kappa_j \leq 2K} \hat G_\lambda^+(\kappa_j) \frac{ \overline{\rho_j}(h) }{ \sqrt{ \cosh(\pi \kappa_j) } } \Sigma_j^{(1)}(M), \]
and in the same way as above we get
\[ \Xi_1(M, K) \ll x^\varepsilon \frac1{rt} \left( \frac{AB}{ x^\frac12 } \frac1{ K^\frac12 } + \frac{ A^\frac32 B^2 }x \frac1{ K^\frac32 } \right) \left( 1 + \frac1K \frac{ (ar, h)^\frac14 h^\frac14 }{ A^\frac12 } \right), \]
which then gives
\[ \int \! G_0(\lambda) \Xi_{ 1 \text{b} }(M)  \, d\lambda \ll x^{\frac56 + \varepsilon} \left( 1 + \frac{ (ar, h)^\frac14 h^\frac14 }{ x^\frac16 } \right). \]

The case \( M \gg M_0^- \) is handled the same way:
We again divide \( \Xi_1(M) \) into two parts
\[ \Xi_1(M) = \sum_{\kappa_j \leq Z} (\ldots) + \sum_{Z < \kappa_j} (\ldots), \]
and this time we use the bound \eqref{eqn: Fourth bound for Bessel transforms of G}, which gives
\[ \int \! G_0(\lambda) \Xi_1(M) \, d\lambda \ll x^{\frac56 + \frac\omega2 + \varepsilon} \left( 1 + \frac{ (ar, h)^\frac14 h^\frac14 }{ x^\frac16 } \right). \]

The same bounds apply for \( \Xi_2(M) \) and \( \Xi_3(M) \), so that we end up with
\begin{align} \label{eqn: intermediate result for R(M)}
  R_{ABC}^+(M) = \int \! G_0(\lambda) \Xi_\text{exc.}(M) \, d\lambda + \BigO{ x^{\frac56 + \frac\omega2 + \varepsilon} \left( 1 + (ar, h)^\frac14 \frac{ h^\frac14 }{ x^\frac16 } \right) }.
\end{align}

\subsection{Treatment of the exceptional eigenvalues}

For \( M \gg M_0^- \), the exceptional eigenvalues pose no problem at all, since the Bessel transforms \( \hat G_\lambda^+(\kappa_j) \) are very small, as can be seen at \eqref{eqn: Third bound for Bessel transforms of G}.
So, \( \Xi_\text{exc.}(M) \) certainly does not exceed the size of the error term in \eqref{eqn: intermediate result for R(M)}.

For \( M \ll M_0^- \), this is a totally different story.
If we would bound \( \Xi_\text{exc.}(M) \) the same way as in the section above using \eqref{eqn: First bound for Bessel transforms of G}, we would end up with
\begin{align} \label{eqn: first bound for Xi_exc}
  \int \! G_0(\lambda) \Xi_\text{exc.}(M) \, d\lambda \ll x^{\frac56 + \theta + \varepsilon} \frac1{ h^\theta } \left( 1 + (ar, h)^\frac14 \frac{ h^\frac14 }{ x^\frac16 } \right).
\end{align}
With the currently best value for \(\theta\), this would weaken our result considerably.
However, we can reduce the effect of the exceptional eigenvalues by exploiting the fact that these eigenvalues appear infrequently.
Cauchy-Schwarz and \eqref{eqn: First bound for Bessel transforms of G} give
\begin{align*}
  \Xi_\text{exc.}(M) \ll \left( \sum_{ \kappa_j \text{ exc.} } \left( \frac1{ \sqrt{hM} } \frac{AB}{rst} \right)^{4i \kappa_j} | \rho_j(h) |^2 \right)^\frac12 \left( \sum_{ \kappa_j \text{ exc.} } \left| \Sigma_j^\text{exc.} (M) \right|^2 \right)^\frac12.
\end{align*}
The second factor be can treated with the large sieve inequalities.
Because of
\[ h^\frac12 \frac1{ \sqrt{hM} } \frac{AB}{rst} \gg x^{\frac12 - \varepsilon} \gg ar, \]
we can use Lemma \ref{lemma: lemma to treat the exceptional eigenvalues} to bound the first factor.
So,
\begin{align*}
  \Xi_\text{exc.}(M) &\ll x^\varepsilon \left( \frac1{ \sqrt{M} } \frac{AB}{rst} \frac1{ar} \right)^{2\theta} (ar, h)^\frac14 \left( 1 + \frac{ h^\frac12 }{ar} \right)^\frac12 \left( 1 + \frac M{ar} \right)^\frac12 M^\frac12 \\
    &\ll x^\varepsilon \frac1{rt} \frac{ x^\theta }{ A^{2\theta} } \left( \frac{AB}{ x^\frac12 } + \frac{A^\frac32 B^2}x \right) (ar, h)^\frac14 \left( 1 + \frac{ h^\frac14 }{ A^\frac12 } \right),
\end{align*}
and hence
\begin{align*}
  \int \! G_0(\lambda) \Xi_\text{exc.}(M) \, d\lambda &\ll x^\varepsilon \frac{ x^\theta }{ A^{2\theta} } \left( x^\frac12 A + A^\frac32 B \right) (ar, h)^\frac14 \left( 1 + \frac{ h^\frac14 }{ A^\frac12 } \right) \\
    &\ll x^{ \frac56 + \frac\theta3 + \varepsilon} (ar, h)^\frac14 \left( 1 + \frac{ h^\frac14 }{ x^\frac16 } \right),
\end{align*}
which is a substantial improvement to \eqref{eqn: first bound for Xi_exc}.

Eventually we get
\[ R_{ABC}^\pm(N) \ll x^{ \frac56 + \varepsilon} \left( x^\frac\omega2 + x^\frac\theta3 \right) (ar, h)^\frac14 \left( 1 + \frac{ h^\frac14 }{ x^\frac16 } \right), \]
which as a consequence gives the error term in \eqref{eqn: main sum}.

\subsection{The main term} \label{subsection: The main terms}

To finish the proof of \eqref{eqn: main sum}, we have to evaluate the main term, which occurs in the case \( \alpha(n) = d(n) \), and which is given by
\begin{align*}
  M_0(w) &= \sum_{a, b} \frac1{ab} \int \! \lambda_{h, ab}(\xi + h) w\left( \frac\xi x \right) h_{ABC} \left( a, b, \frac\xi{ab} \right) \, d\xi \\
    &= x \int \! w(\xi) \sum_{a, b} \frac{ \lambda_{h, ab}(x \xi) }{ab} h\left( a, b, \frac{x \xi}{ab} \right) \, d\xi + \BigO{ x^\varepsilon h },
\end{align*}
so effectively we are concerned with
\[ M_1(\xi) := \sum_{a, b} \frac{ \lambda_{h, ab}(x \xi) }{ab} H_1(a, b; \xi), \quad \text{where} \quad H_1(a, b; \xi) := h\left( a, b, \frac{x \xi}{ab} \right). \]

Using Mellin inversion this sum can be written as
\[ M_1(\xi) = \frac1{2\pi i} \sum_a \frac1a \int_{ (\sigma_1) } \! \hat H_1(a, s; \xi) \sum_{b = 1}^\infty \frac{ \lambda_{h, ab}(x \xi) }{ b^{1 + s} } \, ds, \quad \sigma_1 > 0, \]
where the Mellin transform of \( H_1(a, b; \xi) \) is given by
\[ \hat H_1(a, s; \xi) := \int_0^\infty \! H_1(a, b; \xi) b^{s - 1} \, db, \quad \Re(s) > 0. \]
A routine calculation then shows that for \( \Re(s) > 0 \),
\[ \sum_{b = 1}^\infty \frac{ \lambda_{h, ab}(x \xi) }{ b^{1 + s} } \, ds = \zeta(1 + s) \sum_{d = 1}^\infty \frac{ c_d(h) ( \log(x \xi) + 2\gamma - 2 \log d ) (a, d)^{1 + s} }{ d^{2 + s} }, \]
so that it is sufficient to look at
\begin{align}\label{eqn: first Mellin transformed main term}
  \tilde M_1(\xi, d) := \frac1{2\pi i} \sum_a \frac{ (a, d) }a \int_{ (\sigma_1) } \! \hat H_1(a, s; \xi) \zeta(1 + s) \frac{ (a, d)^s }{ d^s} \, ds.
\end{align}

Here we want to use the residue theorem.
\( \hat H_1(a, s; \xi) \) can be continued meromorphically to the whole complex plane with a simple pole at \( s = 0 \), and its Laurent series is given by
\begin{align*}
  \hat H_1(a, s; \xi) = 3 v_1(a) \frac1s &+ 3 v_1(a) \left( \log\frac{x \xi}a + C(a) \right) + \BigO{s},
\end{align*}
where
\[ C(a) := \int_0^\infty \! v_1'(b) \log b \, db + \frac13 \int_0^\infty \! v_1'(b) v_1\left( \frac{x \xi}{ab} \right) \log\frac{x \xi}{a b^2} \, db. \]
We also have that,
\[ \hat H_1(a, s; \xi) \ll \frac1{ |s| |s + 1| } x^{ \frac13 \Re(s) }. \]
Now we shift the line of integration in \eqref{eqn: first Mellin transformed main term} to \( \Re(s) = -1 + \varepsilon \) and the residue theorem gives
\[ \tilde M_1(\xi, d) = 3 M_{ 2 \text{a} }(\xi, d) + 3 M_{ 2 \text{b} }(\xi, d) + \BigO{ \frac{ d^{1 - \varepsilon} }{ x^{\frac13 - \varepsilon} }}, \]
where
\begin{align*}
  M_{ 2 \text{a} }(\xi, d) &:= \sum_a \frac{ (a, d) }a \log\frac{ (a, d) }a H_{ 2 \text{a} }(a), \quad M_{ 2 \text{b} }(\xi, d) := \sum_a \frac{ (a, d) }a H_{ 2 \text{b} }(a, \xi), \\
  \intertext{and}
  H_{ 2 \text{a} }(a) &:= v_1(a), \quad H_{ 2 \text{b} }(a; \xi) := v_1(a) \left( \log\frac{x \xi}d + \gamma + C_1(a) \right).
\end{align*}

The evaluation of these two sums can be done the same way as above using Mellin inversion and the residue theorem.
The appearing Dirichlet series can be continued meromorphically via
\begin{align*}
  \sum_a \frac{ (a, d) }{ a^{1 + s} } \log\frac{ (a, d) }a &= \sum_{r \mid d} \frac{ \mu(r) }r \sigma_s\left( \frac dr \right) \frac{ ( \zeta'(1 + s) - \zeta(1 + s) \log r ) }{d^s}, \\
  \sum_a \frac{ (a, d) }{ a^{1 + s} } &= \frac{ \zeta(1 + s) }{d^s} \sum_{r \mid d} \frac{ \mu(r) }r \sigma_s\left( \frac dr \right),
\end{align*}
which are identites for \( \Re(s) > 0 \).
Furthermore, the Mellin transforms \( \hat H_{ 2 \text{a} }(s) \) and \( \hat H_{ 2 \text{b} }(s; \xi) \) too have a meromorphic continuation to the whole complex plane, both with a simple pole at \( s = 0 \), and with Laurent series of the form
\begin{align*}
  \hat H_{ 2 \text{a} }(s) &= \frac1s + P_{ 1 \text{a} }(\log x) + s P_{ 2 \text{a} }(\log x) + \BigO{s^2}, \\
  \hat H_{ 2 \text{b} }(s) &= \frac1s P_{ 1 \text{b} }(\log x, \log \xi) + P_{ 2 \text{b} }(\log x, \log \xi) + \BigO{s},
\end{align*}
where \( P_{ 1 \text{a} } \) and \( P_{ 1 \text{b} } \) are linear polynomials, and \( P_{ 2 \text{a} } \) and \( P_{ 2 \text{b} } \) quadratic ones (which may depend on \(d\) and \(v\)).
We also have the bounds
\[ \hat H_{ 2 \text{a} }(s), \hat H_{ 2 \text{b} }(s; \xi) \ll \frac1{ |s| |s + 1| } x^{ \frac13 \Re(s) + \varepsilon }. \]
Now applying the residue theorem the same way as before we get
\[ \tilde M_1(\xi, d) = P(\log x, \log\xi) + \BigO{ \frac{ d^{1 - \varepsilon} }{ x^{\frac13 - \varepsilon} } }, \]
where \(P\) is a quadratic polynomial depending only on \(d\), which as a consequence then gives \eqref{eqn: main sum}.

\section{Proof of Theorems \ref{thm: main theorem for the dual sum of d(n)} and \ref{thm: main theorem for the dual sum of a(n)}}

Now we are interested in the sums
\[ \sum_{n = 1}^{N - 1} d_3(n) d(N - n) \quad \text{and} \quad \sum_{n = 1}^{N - 1} d_3(n) a(N - n), \]
and as before we can consider both sums simultaneously, so that we will stick to the convention that \( \alpha(n) \) is a placeholder for \( d(n) \) or \( a(n) \).
We first construct a smooth decomposition of the unit interval in a form suiting our needs.
There exist smooth and compactly supported functions \( \tilde u_i : \mathbb{R} \rightarrow [0, \infty) \), \( i \geq 1 \), such that
\[ \supp \tilde u_i \subset \left[ \frac1{ 2^{i + 2} }, \frac1{ 2^i } \right], \quad \text{and} \quad \sum_{i = 1}^\infty \tilde u_i(\xi) = 1 \quad \text{for} \quad \xi \in \left(0, 1/4 \right]. \]
For \( i \geq 1 \) we then define
\[ u_i(\xi) := \tilde u_i\left( \frac\xi{N - 1} \right), \quad u_{-i}(\xi) := u_i\left(N - 1 - \xi \right) \quad \text{and} \quad u_0(\xi) := 1 - u_1(\xi) - u_{-1}(\xi), \]
so that by construction
\[ \sum_{ i \in \mathbb{Z} } u_i(\xi) = 1 \quad \text{for} \quad \xi \in (0, N - 1). \]

We have
\[ \sum_{n = 1}^{N - 1} d_3(n) \alpha(N - n) = \sum_{ i \in \mathbb{Z} } \sum_n u_i(n) d_3(n) \alpha(N - n), \]
hence it is enough to look at the sums
\[ \Psi_i(N) := \sum_n u_i(n) d_3(n) \alpha(N - n). \]
The evaluation of these sums follows the same path as in section \ref{section: main proof}, we will therefore use in large parts the same notation and omit many details.

For the sake of easier notation, we will leave out the \(i\)-subscript from now on.
So \( u(\xi) := u_i(\xi) \), and we have
\[ \supp u(\xi) \subseteq \left[ \frac x2, 2x \right] \quad \text{for} \quad i \geq 0, \quad \supp u(\xi) \subseteq \left[ N - 2x, N - \frac x2 \right] \quad \text{for} \quad i < 0, \]
with
\[ x := \frac N{ 2^{|i| + 1} }. \]
A first trivial bound is then given by
\[ \Psi(N) := \Psi_i(N) \ll N^\varepsilon x. \]
The decomposition we use for \( d_3(n) \) is the same as in \eqref{eqn: decomposition of d_3(n)}, but with a different normalization, namely
\[ v_1(\xi) := v\left( \frac\xi{ (N - 1)^\frac13} \right), \quad v_2(\xi) := v\left( \frac\xi{ \sqrt{ \frac{N - 1}a } } \right). \]

It is enough to look at
\[ \Psi_{ABC} := \sum_{a, b, c} h_{ABC}(a, b, c) \alpha(N - abc) u(abc) = \sum_{a, b} \sum_{ m \equiv N (ab) } \alpha(m) f(m; a, b) \]
with
\[ f(m; a, b) := h_{ABC}\left( a, b, \frac{N - m}{ab} \right) u(N - m). \]
After using the Voronoi formula and reordering the sums, we get as a possible main term
\[ M_0(N) := \sum_{a, b} \frac1{ab} \int \! \lambda_{N, ab}(\xi) f(\xi; a, b) \, d\xi, \]
and as error terms we eventually have to deal with
\[ R_{ABC}^\pm(M) := \sum_{M < m \leq 2M} \alpha(m) \sum_{ar \mid c} \frac{ S(N, \pm m; c) }c F^\pm(c, m), \]
where \( F^\pm(c, m) \) is defined the same way as in \eqref{eqn: definition of F}.
We only need to look at the \( R_{ABC}^\pm(M) \) with \( M \leq M_0^\pm \), given by
\begin{align*}
  &M_0^+ := \frac{ N^{1 + \varepsilon} }{x^2} \left( \frac{AB}{rst} \right)^2, \quad M_0^- := \frac{ N^\varepsilon }N \left( \frac{AB}{rst} \right)^2 \quad \text{for} \quad i \geq 0, \\
  \intertext{and}
  &M_0^+ := M_0^- := \frac{ N^\varepsilon }x \left( \frac{AB}{rst} \right)^2 \quad \text{for} \quad i < 0,
\end{align*}
since otherwise \( R_{ABC}^\pm(M) \) is small.

We bring again everything into the right shape for the use of the Kuznetsov formula by setting
\[ \tilde F^\pm(c, m) := F^\pm\left( \frac{ 4\pi \sqrt{Nm} }c, m \right) \]
and using Poisson inversion so separate the variable \(m\), so that
\[ R_{ABC}^\pm(M) = \int \! G_0(\lambda) \sum_{M < m \leq 2M} \alpha(m) \e(\lambda m) \sum_{ar \mid c} \frac{ S(N, \pm m; c) }c G_\lambda^\pm\left( \frac{ 4\pi \sqrt{Nm} }c \right) \, d\lambda, \]
where
\[ G_0(\lambda) := N^\varepsilon x \frac{rt}B \min\left( M, \frac1\lambda, \frac1{ M \lambda^2 } \right). \]
Set
\[ W := \sqrt{NM} \frac{rst}{AB}. \]
When bounding the Bessel transforms, we have to distinguish between the cases \( i \geq 0 \) and \( i < 0 \).

\subsection{The case \( i \geq 0 \)}

In this case, we have the following bounds when \( M \ll M_0^- \),
\begin{align*}
  \hat G_\lambda^\pm(ic), \check G_\lambda^\pm(ic) &\ll N^\varepsilon W^{-2c} \quad & &\text{for} \quad 0 \leq c < \frac14, \\
  \hat G_\lambda^\pm(c), \check G_\lambda^\pm(c), \tilde G_\lambda^\pm(c) &\ll \frac{ N^\varepsilon }{1 + c^\frac52} \quad & &\text{for} \quad c \geq 0,
\end{align*}
while for \( M_0^- \ll M \ll M_0^+ \) we have
\begin{align*}
  \hat G_\lambda^\pm(ic), \check G_\lambda^\pm(ic) &\ll \frac{ N^\varepsilon }W \quad & &\text{for} \quad 0 \leq c < \frac14, \\
  \hat G_\lambda^\pm(c), \tilde G_\lambda^\pm(c) &\ll \frac{ N^\varepsilon }W \left( \frac{ W^\frac12 }c \right)^\nu \quad & &\text{for} \quad c \geq 0, \\
  \check G_\lambda^\pm(c)  &\ll \frac{ N^\varepsilon }{ W^\frac32 } \left( \frac Wc \right)^\nu \quad & &\text{for} \quad c \geq 0.
\end{align*}
All these bounds can be derived the same way as in Lemma \ref{lemma: bounds for the Bessel transforms of G}.
There are two slight differences, though:
Applying partial integration once over \(\xi\) is useless here.
Furthermore, instead of Lemma \ref{lemma: estimates for the Bessel transforms of oscillating functions for large alpha} we need to use Lemma \ref{lemma: estimates for the Bessel transforms of oscillating functions for alpha near 1}.

Now applying the Kuznetsov formula and the large sieve inequalities, we get that
\[ R_{ABC}^+(M) \ll N^{ \frac{11}{12} + \varepsilon } \quad \text{and} \quad R_{ABC}^-(M) \ll N^{ \frac{11}{12} + \varepsilon } + \frac{ N^{\frac43 + \varepsilon} }{ x^\frac12}. \]
In contrast to section \ref{section: main proof}, the exceptional eigenvalues cause no problem at all.

\subsection{The case \( i < 0 \)}

The bounds for the Bessel transforms for \( M \ll \frac{ N^\varepsilon}N \left( \frac{AB}{rst} \right)^2 \) are given by
\begin{align*}
  \hat G_\lambda^\pm(ic), \check G_\lambda^\pm(ic) &\ll N^\varepsilon W^{-2c} \quad & &\text{for} \quad 0 \leq c < \frac14, \\
  \hat G_\lambda^\pm(c), \check G_\lambda^\pm(c), \tilde G_\lambda^\pm(c) &\ll \frac{ N^\varepsilon }{1 + c^\frac52} \quad & &\text{for} \quad c \geq 0,
\end{align*}
and for \( \frac{ N^\varepsilon}N \left( \frac{AB}{rst} \right)^2 \ll M \ll M_0^\pm \) by
\begin{align*}
  \hat G_\lambda^\pm(ic), \check G_\lambda^\pm(ic) &\ll \frac{ N^\varepsilon }W \quad & &\text{for} \quad 0 \leq c < \frac14, \\
  \hat G_\lambda^\pm(c), \check G_\lambda^\pm(c), \tilde G_\lambda^\pm(c) &\ll \frac{ N^\varepsilon }W \quad & &\text{for} \quad c \geq 0, \\
  \hat G_\lambda^\pm(c), \check G_\lambda^\pm(c), \tilde G_\lambda^\pm(c) &\ll \frac{ N^\varepsilon }{ c^\frac52 } \left( 1 + \frac W{ c^\frac12 } \right) \quad & &\text{for} \quad c \gg W.
\end{align*}
Another use of the Kuznetsov formula gives
\[ R_{ABC}^\pm(M) \ll N^{ \frac{11}{12} + \varepsilon }. \]
So, altogether we have for all \( i \in \mathbb{Z} \),
\[ R_{ABC}^\pm(M) \ll N^{ \frac{11}{12} + \varepsilon } + \frac{ N^{\frac43 + \varepsilon} }{ x^\frac12}. \]
We use this bound for \( x \gg N^\frac89 \) and otherwise bound trivially, to get the error terms claimed in Theorems \ref{thm: main theorem for the dual sum of d(n)} and \ref{thm: main theorem for the dual sum of a(n)}.

\subsection{The main term}

To finish the proof, we have to evaluate the main term, which occurs in the case \( \alpha = d \) and which is given by
\begin{align*}
  M_0(N) &= \sum_{a, b} \frac1{ab} \int_1^{N - 1} \! \lambda_{N, ab}(\xi) h\left(a, b, \frac{N - \xi}{ab} \right) \, d\xi \\
    &= N \int_0^1 \! \sum_{a, b} \frac{ \lambda_{N, ab}( N(1 - \xi) ) }{ab} h\left( a, b, \frac{ (N - 1)\xi }{ab} \right) \, d\xi + \BigO{ N^\varepsilon }.
\end{align*}
This, too, can be done the same way as in section \ref{subsection: The main terms}, so we will just state some intermediate results.
It is enough to look at
\[ M_1(\xi) = \sum_{a, b} \frac{ \lambda_{N, ab}( N(1 - \xi) ) }{ab} h\left( a, b, \frac{ (N - 1)\xi }{ab} \right) \, d\xi, \]
and this sum can be evaluated by using Mellin inversion and the residue theorem, so that we get
\begin{align*}
  M_1(\xi) &= 3 \sum_{d = 1}^\infty \frac{ c_d(N) \left( \log( N (1 - \xi) ) + 2\gamma - 2 \log d \right) }{d^2} \left( M_{ 2 \text{a} }(\xi, d) + M_{ 2 \text{b} }(\xi, d) \right) \\
    &\phantom{ ={} } + \BigO{ \frac1{ N^{\frac13 - \varepsilon} \xi^{1 - \varepsilon} } },
\end{align*}
with
\begin{align*}
  M_{ 2 \text{a} }(d) &= \sum_a \frac{ (a, d) }a \log\left( \frac{ (a, d) }a \right) v_1(a), \\
  M_{ 2 \text{b} }(\xi, d) &= \sum_a \frac{ (a, d) }a v_1(a) \left( \log\left( \frac{ N\xi }d \right) + \gamma + C(a) \right),
\end{align*}
and
\[ C(a) = \int_0^\infty \! v_1'(b) \log b \, db + \frac13 \int_0^\infty \! v_1'(b) v_1\left( \frac{ (N - 1)\xi }{ab} \right) \log\left( \frac{ (N - 1)\xi }{ ab^2 } \right) \, db . \]

The evaluation of \( M_{ 2 \text{a} }(d) \) and \( M_{ 2 \text{b} }(\xi, d) \) follows the usual pattern, and as result we get
\[ M_{ 2 \text{a} }(d) + M_{ 2 \text{b} }(\xi, d) = \sum_{r \mid d} \frac{ \mu(r) }r \sum_{ m \mid \frac dr } P_2(\log N, \log d, \log r, \log m) + \BigO{ \frac{ d^{1 - \varepsilon} }{ N^{\frac13 - \varepsilon} \xi^{1 - \varepsilon} } }, \]
where \( P_2 \) is a quadratic polynomial (which depends on \(\xi\)).
From this we see that \( M_0(N) \) has the form
\[ M_0(N) = N \sum_{d = 1}^\infty \frac{ c_d(N) }{d^2} \sum_{r \mid d} \frac{ \mu(r) }r \sum_{ m \mid \frac dr } P_3(\log N, \log d, \log r, \log m) + \BigO{ N^{\frac23 + \varepsilon} }, \]
with a cubic polynomial \( P_3 \).

We want to reshape this result a little bit.
Set
\[ G(\alpha, \beta, \gamma, \delta) := N^\alpha \sum_{d = 1}^\infty \frac{ c_d(N) }{ d^{2 - \beta} } \sum_{r \mid d} \frac{ \mu(r) }{ r^{1 - \gamma} } \sigma_\delta\left( \frac dr \right), \]
so that the main term can be stated in terms of the partial derivatives of \(G\) up to third order evaluated at \( (0, 0, 0, 0) \).
A lengthy but elementary calculation shows that
\begin{align*}
  G(\alpha, \beta, \gamma, \delta) &= N^\alpha \sum_{d \mid N} \sum_\twoln{c \mid d}{b \mid c} \mu\left( \frac dc \right) \frac{ c^{1 - \gamma + \delta} }{ d^{2 - \gamma - \beta} b^\delta } \sum_{ (r, d) = 1 } \sum_\twoln{ (s, br) = 1 }{ (d, rs) = 1 } \frac{ \mu^2(r) \mu(s) \mu(d) }{ r^{3 - \gamma - \delta} s^{2 - \beta - \delta } d^{2 - \beta} } \\
    &= C(\beta, \gamma, \delta) N^\alpha \sum_{d \mid N} \frac{ \chi_1(d) }{ d^{1 - \beta} } \sum_{c \mid d} \chi_2\left( \frac dc \right) \chi_3(c),
\end{align*}
with
\[ C(\beta, \gamma, \delta) := \frac1{ \zeta(2 - \delta) } \prod_p \left( 1 - \frac{ p^{1 - \gamma + \delta} - 1 }{ p^{1 - \gamma} ( p^{2 - \beta} - 1 ) } \right) \]
and \( \chi_1 \), \( \chi_2 \), and \( \chi_3 \) defined as in \eqref{eqn: definition of chi_1, chi_2 and chi_3}.
This eventually gives Theorem \ref{thm: main theorem for the dual sum of d(n)}.

\bibliography{ConvSumsArt}

\end{document}